\documentclass[twoside,reqno]{amsart}
\usepackage[utf8]{inputenc}
\usepackage{amssymb}
\usepackage{amsfonts}
\usepackage{amsthm}
\usepackage{amsmath}
\usepackage{bbm}
\usepackage{fancyhdr}
\usepackage{enumerate}
\usepackage{enumitem}
\usepackage{tikz}
\usepackage{tikz-cd}
\usepackage{mathtools}
\usepackage{color, soul}

\usepackage[backend=biber,style=numeric,sorting=anyt, maxbibnames=99]{biblatex}
\addbibresource{bibliography.bib}
\DeclareNameAlias{sortname}{given-family}
\DeclareNameAlias{default}{given-family}

\usepackage{hyperref}
\hypersetup{colorlinks, linkcolor={blue}, citecolor={blue}, urlcolor={red}}

\begin{document}

\newtheorem{theorem}{Theorem}[section]
\newtheorem{lemma}[theorem]{Lemma}
\newtheorem{proposition}[theorem]{Proposition}
\newtheorem{corollary}[theorem]{Corollary}
\theoremstyle{definition}
\newtheorem{definition}[theorem]{Definition}
\newtheorem{example}[theorem]{Example}
\newtheorem{remark}[theorem]{Remark}
\newtheorem{exercise}[theorem]{Exercise}
\newtheorem{convention}[theorem]{Convention}

\numberwithin{equation}{section}

\pagestyle{fancy}
\fancyhead[OC]{Generalised Twisted Groupoids and their $C^*$-algebras}
\fancyhead[OL]{\thepage}
\fancyhead[OR]{}
\fancyhead[EC]{L. O. Clark, M. \'{O} Ceallaigh, H. Pham}
\fancyhead[ER]{\thepage}
\fancyhead[EL]{}
\fancyfoot{}
\renewcommand{\headrulewidth}{0pt}
\setlength\headheight{24pt}

\newcommand{\C}{\mathbb{C}}    
\newcommand{\Z}{\mathbb{Z}}    
\newcommand{\T}{\mathbb{T}}    

\newcommand{\Ccal}{\mathcal{C}}
\newcommand{\Bcal}{\mathcal{B}}
\newcommand{\sfrak}{\mathfrak{s}}

\newcommand{\Lp}{\mathit{L}}
\newcommand{\Ltwo}{\Lp^2}

\newcommand{\innerproduct}[2]{\left\langle #1\,\middle|\,#2\right\rangle}
\newcommand{\norm}[1]{\left\lVert#1\right\rVert}
\newcommand{\abs}[1]{\left|#1\right|}
\newcommand{\set}[1]{\left\{#1\right\}}  

\newcommand{\support}{\textnormal{supp}}
\newcommand{\id}{\text{id}}
\newcommand{\Ind}{\textnormal{Ind}}
\newcommand{\average}[1]{{#1}_G}

\newcommand{\xrightarrowdbl}[2][]{%
  \xrightarrow[#1]{#2}\mathrel{\mkern-14mu}\rightarrow
}

\newcommand{\hp}[2]{{\color{blue}[(HP) #1] #2}}

\title{\textbf{Generalised Twisted Groupoids and their $C^*$-algebras}}

\author{Lisa Orloff Clark}
\address{L. O. Clark: Victoria University of Wellington, Wellington, New Zealand}
\email{lisa.orloffclark@vuw.ac.nz}

\author{Michael \'O Ceallaigh}
\address{M. \'O Ceallaigh: Victoria University of Wellington, Wellington, New Zealand}
\email{michael.e.kelly@vuw.ac.nz}

\author{Hung Pham}
\address{H. Pham: Victoria University of Wellington, Wellington, New Zealand}
\email{hung.pham@vuw.ac.nz}
\date{}

\subjclass[2020]{46L05}
\keywords{Twisted groupoid C*-algebra, generalised twist}

\thanks{This research
was supported by Marsden grant  21-VUW-156 from the Royal Society of New Zealand.  The authors would also like to thank Becky Armstrong and Astrid an Huef for helpful conversations. 
}


\begin{abstract}
    We consider a locally compact Hausdorff groupoid $G$, and twist by a more general locally compact Hausdorff abelian group $\Gamma$ rather than the complex unit circle $\mathbb{T}$. We investigate the construction of $C^*$-algebras in analogue to the usual twisted groupoid $C^*$-algebras, and we show that, in fact, any $\Gamma$-twisted groupoid $C^*$-algebra is isomorphic to a usual twisted groupoid $C^*$-algebra.
\end{abstract}

\maketitle

\section{Introduction}
Groupoid $C^*$-algebras are a widely studied class of $C^*$-algebras that includes many important subclasses, for example, transformation group $C^*$-algebras \cite[Example~3.3]{CaH1},  graph and higher-rank graph $C^*$-algebras \cite{KPRR, Kumjian_1986} and inverse semigroup $C^*$-algebras \cite{Paterson}.
However,  Buss and Sims show in \cite{buss--sims_opposite} that there are in fact $C^*$-algebras that are not groupoid $C^*$-algebras. 
Yet, introduced by Renault in \cite{renault_thesis}, groupoid $C^*$-algebras arose as a special case of \textit{twisted} groupoid $C^*$-algebras, giving us a natural object on which to focus with the hope of modelling even more $C^*$-algebras. To this end, much progress has been made already. 
Kumjian demonstrates in \cite{Kumjian_1986} that diagonal pairs $(A, B)$ of $C^*$-algebras are characterised by twists over principal groupoids, and Renault demonstrates in \cite{Renault_IMSbulletin} that Cartan pairs $(A, B)$ of $C^*$-algebras are characterised by twists over effective groupoids.  More recently, the first-named author and co-authors show in \cite{BCLM} that $C^*$-algebras that contain semi-Cartan subalgebras are characterised by twists over any groupoid.
To date, there is no known example of a $C^*$-algebra that fails to be isomorphic to a twisted groupoid $C^*$-algebra.

It is remarkable that in many results concerning twisted groupoid $C^*$-algebras, given a twist $\Sigma$ over $G$ as in Def. \ref{def:gamma_twist} with $\Gamma = \mathbb{T}$, the key hypotheses are often placed on the underlying groupoid $G$ rather than the twist itself. This observation guides the focus of the present paper. We aim to further develop the idea that, from the perspective of $C^*$-algebra theory, the structural properties of the groupoid $G$ play a more central role than the specific nature of its twisting. While the twist certainly influences the resulting $C^*$-algebra, our results suggest that much of the essential behaviour is determined by $G$.

A twisted groupoid is defined, in its full generality, in terms of a group bundle with fibre set $\T$, the complex unit circle. In this paper we study a generalised construction, allowing any locally compact Hausdorff abelian group $\Gamma$ to be the fibre set of that bundle. We refer to the result as a $\Gamma$\textit{-twist}. This is dealt with also in \cite{pushouts}, with Example 3.5 exhibiting the generalised twist construction, and in \cite{IKRSW21, IKSW19}.  Although the authors of \cite{pushouts} call a $\Gamma$-twist a ``generalised twist'', we show that these new twists do not give any new $C^*$-algebras.

In the literature, when presenting the definition of a twist, usually the aforementioned group bundle is specified to be locally trivial. Otherwise this is omitted from the definition and instead the projection of the bundle into the base space is specified to be a continuous open surjection. As discussed e.g. in \cite[Remark 2.6]{Armstrong_2022}, local triviality for twists follows automatically from this latter condition. This is not always true for $\Gamma$-twists, so we will only assume our bundle comes with a continuous open surjection.

We begin with a preliminary survey of the structure of a $\Gamma$-twisted groupoid compared to a $\mathbb{T}$-twisted groupoid. We then move towards building a function algebra on the $\Gamma$-twisted groupoid; in Section \ref{section:Haar} we exhibit a natural Haar system construction for a $\Gamma$-twist over a groupoid with Haar system, and in Section \ref{section:staralgebra} we develop a suitable $*$-algebra in analogue with the $*$-algebra of equivariant functions in the $\mathbb{T}$-twisted groupoid case, and conclude that $\Gamma$ should in fact be compact for the purpose of this investigation. Then in Section \ref{section:c*-algebras} we move on to develop analogous definitions of the full and reduced $\Gamma$-twisted groupoid $C^*$-algebras.

We conclude in Section \ref{section:comparison} with our main result. Theorem \ref{thm: compatible twists have isomorphic algebras} exhibits a comparison between $\Gamma$- and $\Gamma^{\prime}$-twists over a groupoid $G$ in the presence of a continuous group homomorphism $\theta: \Gamma \to \Gamma^{\prime}$. Then, in the special case $\Gamma^{\prime} = \mathbb{T}$, meaning $\theta$ is a character of $\Gamma$, we find that the $\Gamma$-twisted groupoid $C^*$-algebras are isomorphic to $\mathbb{T}$-twisted groupoid $C^*$-algebras.

\section{Preliminaries}\label{section:preliminaries}

We begin by presenting the main object of this paper, the generalised twisted groupoid. This definition is adapted from the most general definition of a twist, as presented e.g. in \cite{Armstrong_2022, courtney2023alexandrov, simsgroupoidnotes, BCLM, Kumjian_1986, CaH2}. In the seminal work of Renault, twists were described in terms of a continuous $\mathbb{T}$-valued $2$-cocycle, in which case the twist is a topologically trivial bundle over $G$. Kumjian \cite{Kumjian_1986} introduced the more general notion of a twist, where that bundle may be only locally trivial, in order to extend the work of Renault to obtain an analogue with the Feldman--Moore theory for von Neumann algebras. There are indeed examples of twists which are topologically non-trivial, see e.g. \cite[\S 4]{Kumjian_1986}, \cite[Ex. 2.1]{Muhly_Williams_1992}, \cite{armstrong2023twistminimaletalegroupoid}.

\begin{definition}\label{def:gamma_twist}
    Let $\Gamma$ be a locally compact Hausdorff abelian group with identity element $e$, and let $G$ be a locally compact Hausdorff groupoid, regarding $G^{(0)} \times \Gamma$ as a trivial group bundle with fibres $\Gamma$. A $\Gamma$\textit{-twist} $(\Sigma, \iota, \pi)$ over $G$ is a locally compact Hausdorff groupoid $\Sigma$ with unit space $\Sigma^{(0)} = \iota(G^{(0)} \times \set{e})$ where we have the sequence    
    \[
    \begin{tikzcd}
        G^{(0)} \times \Gamma \arrow[r, "\iota"] & \Sigma \arrow[r, "\pi"] & G
    \end{tikzcd}
    \]
    and the following conditions are satisfied:
    \begin{enumerate}[label = \textnormal{(\roman*)}, left = \parindent]
        \item{The maps $\iota$, $\pi$ are continuous groupoid homomorphisms, $\iota$ a topological embedding and $\pi$ a quotient map, which restrict to homeomorphisms of unit spaces. We identify $\Sigma^{(0)} = G^{(0)}$ via $\pi |_{\Sigma^{(0)}}$;}
        \item{The sequence above is exact, meaning in addition to $\iota$ being injective and $\pi$ surjective, we have $\iota(G^{(0)} \times \Gamma) = \pi^{-1}(G^{(0)})$;}
        \item{The image of $\iota$ is central in $\Sigma$, meaning for each $\sigma \in \Sigma$ and each $\alpha \in \Gamma$ we have $\iota(r(\sigma), \alpha)\sigma = \sigma\iota(s(\sigma), \alpha)$.} \footnote{We note that this centrality condition forces the group $\Gamma$ to be abelian.}
    \end{enumerate}
\end{definition}

\begin{remark}
    We note that Def. \ref{def:gamma_twist} is equivalent to defining $\Sigma$ as a $\Gamma$-groupoid in analogue with \cite{BCLM, CaH2, Kumjian_1986}. That is, $\Sigma$ is a locally compact Hausdorff groupoid together with an action of the locally compact Hausdorff abelian group $\Gamma$ such that
    \begin{enumerate}[label = \textnormal{(\roman*)}, left = \parindent]
        \item{For $(\sigma, \tau) \in \Sigma^{(2)}$ and $\alpha, \beta \in \Gamma$ we have $(\alpha \cdot \sigma, \beta \cdot \tau) \in \Sigma^{(2)}$ and $(\alpha \cdot \sigma)(\beta \cdot \tau) = \alpha \beta \cdot \sigma \tau$; and}
        \item{$G := \Sigma/\Gamma$ is a locally compact Hausdorff groupoid.}
    \end{enumerate}
\end{remark}

We refer to a $\mathbb{T}$-twist, simply as a \textit{twist}. It will be useful to make note of the following relationship between the homomorphisms $\iota$ and $\pi$:

\begin{proposition}\label{beckys_lemma}
For each $u \in G^{(0)}$ and each $\alpha \in \Gamma$ we have
    \[
        \pi\left( \iota(u, \alpha) \right) = u.
    \]
    In particular, for any $\sigma \in \Sigma$ this gives $r(\sigma) = r(\pi(\sigma))$ and $s(\sigma) = s(\pi(\sigma))$.
\end{proposition}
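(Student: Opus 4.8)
The plan is to unwind the defining axioms, using that $\iota$ and $\pi$ are groupoid homomorphisms (so they intertwine the range and source maps) together with the exactness condition~(ii). The starting observation is that, for fixed $u \in G^{(0)}$ and $\alpha \in \Gamma$, the element $\iota(u,\alpha)$ lies in $\iota(G^{(0)} \times \Gamma) = \pi^{-1}(G^{(0)})$ by axiom~(ii), so its image $\pi(\iota(u,\alpha))$ is already known to be a unit of $G$. The entire content of the first assertion is then to identify \emph{which} unit it is.

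To pin this down I would compute a range. A unit $v \in G^{(0)}$ satisfies $v = r(v)$, and since $\pi$ commutes with the range map we may write
\[
\pi(\iota(u,\alpha)) = r\bigl(\pi(\iota(u,\alpha))\bigr) = \pi\bigl(r(\iota(u,\alpha))\bigr).
\]
Here the crucial structural input is the group-bundle structure on $G^{(0)} \times \Gamma$: its range and source maps both send $(u,\alpha)$ to the fibrewise identity $(u,e)$. Because $\iota$ is a homomorphism, $r(\iota(u,\alpha)) = \iota(u,e)$, and the identification of unit spaces recorded just after Definition~\ref{def:gamma_twist} gives $\pi(\iota(u,e)) = u$. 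Chaining these equalities yields $\pi(\iota(u,\alpha)) = u$, which is the first claim.

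For the ``in particular'' statement I would again invoke that $\pi$ intertwines the range and source maps, giving $\pi(r(\sigma)) = r(\pi(\sigma))$ and $\pi(s(\sigma)) = s(\pi(\sigma))$ for every $\sigma \in \Sigma$. Since $r(\sigma), s(\sigma) \in \Sigma^{(0)}$ and we identify $\Sigma^{(0)} = G^{(0)}$ via $\pi|_{\Sigma^{(0)}}$, these read exactly as $r(\sigma) = r(\pi(\sigma))$ and $s(\sigma) = s(\pi(\sigma))$; the first assertion is what makes this identification behave as expected on elements of the form $\iota(u,e)$, so that no circularity enters.

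I do not anticipate a genuine obstacle, as the statement is a purely formal consequence of the axioms. The only point requiring care is the bookkeeping: correctly using the group-bundle structure of $G^{(0)} \times \Gamma$ (so that the range of $(u,\alpha)$ is $(u,e)$ rather than any other fibre element), and keeping the identification $\Sigma^{(0)} \cong G^{(0)}$ via $\pi|_{\Sigma^{(0)}}$ consistent throughout the argument.
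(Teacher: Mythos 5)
Your proof is correct, but it takes a genuinely different route from the paper's. The paper never invokes the fact that homomorphisms intertwine the range map; instead it uses the centrality axiom (iv) to see that $(\iota(r(\sigma),\alpha),\sigma)\in\Sigma^{(2)}$ for any $\sigma\in\Sigma$, applies $\pi$ to obtain a composable pair in $G^{(2)}$, and uses exactness (ii) to conclude that $\pi(\iota(r(\sigma),\alpha))$ is a unit composable on the left with $\pi(\sigma)$, hence equal to $r(\pi(\sigma))$; since this is independent of $\alpha$, setting $\alpha=e$ and using the identification $\iota(u,e)=u=\pi(u)$ yields $r(\pi(\sigma))=r(\sigma)$ and then $\pi(\iota(u,\alpha))=u$ because every $u\in G^{(0)}$ is $r(\sigma)$ for some $\sigma$. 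You instead use exactness only to know that $\pi(\iota(u,\alpha))\in G^{(0)}$, and then compute $\pi(\iota(u,\alpha))=r(\pi(\iota(u,\alpha)))=\pi(r(\iota(u,\alpha)))=\pi(\iota(u,e))=u$, exploiting the group-bundle structure of $G^{(0)}\times\Gamma$ (namely $r(u,\alpha)=(u,e)$) together with functoriality of $r$ under both $\iota$ and $\pi$. Your argument is somewhat more economical: it avoids axiom (iv) in the main computation (beyond its role in the recorded identification $\iota(u,e)=u=\pi(u)$, which both proofs use) and dispenses with the ``independent of $\alpha$'' step and the surjectivity-of-$r$ step, whereas the paper's argument works purely with composability and so never needs to make the groupoid structure of the bundle $G^{(0)}\times\Gamma$ explicit. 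One remark on your final paragraph: under the identification of $\Sigma^{(0)}$ with $G^{(0)}$ via $\pi|_{\Sigma^{(0)}}$, the claim $r(\sigma)=r(\pi(\sigma))$ is \emph{literally} the statement $\pi(r(\sigma))=r(\pi(\sigma))$, so the ``in particular'' part follows from the homomorphism property alone, exactly as you observe --- there is no circularity, and strictly speaking no appeal to the first assertion is needed there.
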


\begin{proof}
Take $\sigma\in \Sigma$ and $\alpha\in \Gamma$. From (iii) of Definition \ref{def:gamma_twist}, we can see that $(\iota(r(\sigma),\alpha),\sigma)\in \Sigma^{(2)}$, and subsequently since $\pi$ is a groupoid homomorphism $(\pi(\iota(r(\sigma),\alpha)),\pi(\sigma))\in G^{(2)}$. This and Definition \ref{def:gamma_twist}(ii) imply that 
\[
    \pi(\iota(r(\sigma),\alpha))=r(\pi(\sigma)).
\]
Thus the left-hand side is independent of $\alpha\in \Gamma$, and so
\begin{align*}
    \pi(\iota(r(\sigma),\alpha))=\pi(\iota(r(\sigma),e))=r(\sigma),
\end{align*}
through the identifications of unit spaces, explained above. This proves
\[
    r(\pi(\sigma))=r(\sigma)\quad\text{and}\quad \pi\left( \iota(u, \alpha) \right) = u,
\]
where the latter is because every $u\in G^{(0)}$ is $r(\sigma)$ for some $\sigma$. The assertion that $s(\sigma) = s(\pi(\sigma))$ follows similarly.
\end{proof}

\begin{lemma}\label{lemma:groupaction}
Define an operation $\cdot$ from $\Gamma \times \Sigma$ to $\Sigma$ by
    $$ \alpha \cdot \sigma := \iota(r(\sigma), \alpha)\sigma = \sigma \iota(s(\sigma), \alpha) $$
    for each $\sigma \in \Sigma$ and $\alpha \in \Gamma$. Then $\cdot$ is a continuous free action of $\Gamma$ on $\Sigma$.
\end{lemma}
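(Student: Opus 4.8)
The plan is to verify in turn that $\cdot$ is well defined, satisfies the two group-action axioms, is continuous, and is free, drawing each property from the corresponding structural feature of the $\Gamma$-twist. Before anything else I would confirm that the two defining expressions make sense and agree. Since $\iota$ is a groupoid homomorphism and $(u,\alpha)$ has range and source $(u,e)$ in the trivial group bundle $G^{(0)} \times \Gamma$, we get $r(\iota(u,\alpha)) = s(\iota(u,\alpha)) = u$ after the identification of unit spaces; taking $u = r(\sigma)$ shows $s(\iota(r(\sigma),\alpha)) = r(\sigma)$, so that $(\iota(r(\sigma),\alpha),\sigma) \in \Sigma^{(2)}$ and the product $\iota(r(\sigma),\alpha)\sigma$ is defined (and symmetrically on the source side). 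The equality of the two expressions is then precisely the centrality axiom (iv) of Definition \ref{def:gamma_twist}. I would also record at this stage that $r(\alpha \cdot \sigma) = r(\sigma)$ and $s(\alpha \cdot \sigma) = s(\sigma)$.

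For the action axioms, the identity law is immediate: $e \cdot \sigma = \iota(r(\sigma),e)\sigma = r(\sigma)\sigma = \sigma$, using $\iota(u,e) = u$. For compatibility I would compute $\alpha \cdot (\beta \cdot \sigma) = \iota(r(\sigma),\alpha)\iota(r(\sigma),\beta)\sigma$, where I have used $r(\beta \cdot \sigma) = r(\sigma)$ from the previous paragraph. The key reduction is that the two factors $(r(\sigma),\alpha)$ and $(r(\sigma),\beta)$ are composable in the fibre over $r(\sigma)$ with product $(r(\sigma),\alpha\beta)$, so applying the homomorphism $\iota$ gives $\iota(r(\sigma),\alpha)\iota(r(\sigma),\beta) = \iota(r(\sigma),\alpha\beta)$; hence $\alpha \cdot (\beta \cdot \sigma) = \iota(r(\sigma),\alpha\beta)\sigma = (\alpha\beta)\cdot\sigma$.

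Continuity is then a routine composition argument: the range map on $\Sigma$ is continuous, so $(\alpha,\sigma) \mapsto (r(\sigma),\alpha)$ is continuous into $G^{(0)} \times \Gamma$; composing with the continuous map $\iota$ and then with the continuous multiplication $\Sigma^{(2)} \to \Sigma$ (whose domain we have just checked contains the relevant pairs) yields continuity of $\cdot$.

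The one place where a hypothesis does real work is freeness, and this is the step I would single out as requiring the most care. Suppose $\alpha \cdot \sigma = \sigma$. Multiplying on the right by $\sigma^{-1}$ and using that $r(\sigma)$ is the source unit of $\iota(r(\sigma),\alpha)$ collapses the relation to $\iota(r(\sigma),\alpha) = r(\sigma) = \iota(r(\sigma),e)$. Injectivity of $\iota$, from axiom (ii) of Definition \ref{def:gamma_twist}, then forces $(r(\sigma),\alpha) = (r(\sigma),e)$, whence $\alpha = e$. The main obstacle is really just the bookkeeping of composability and the correct identification of units throughout; once those are in place, the group-action identities reduce cleanly to the homomorphism property of $\iota$, and freeness is exactly where the injectivity of the sequence is used.
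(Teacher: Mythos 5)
Your proposal is correct and follows essentially the same route as the paper's proof: identity via $\iota(u,e)=u$, compatibility via the homomorphism property of $\iota$ on the fibre over $r(\sigma)$, freeness by cancelling $\sigma$ and invoking the injectivity of $\iota$ from Definition~\ref{def:gamma_twist}(ii), and continuity as a composition of continuous maps. Your explicit verification that $\left(\iota(r(\sigma),\alpha),\sigma\right)\in\Sigma^{(2)}$ is a detail the paper leaves implicit, but it does not change the argument.
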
 

\begin{proof}
Since, for $\alpha$, $\beta \in \Gamma$ and $\sigma \in \Sigma$,
$$e \cdot \sigma = \iota \left(r(\sigma), e \right)\sigma = r(\sigma)\sigma = \sigma$$
and
\begin{equation*}
\begin{split}
    \beta \cdot (\alpha \cdot \sigma) &= \beta \cdot \left[ \iota \left(r(\sigma), \alpha \right)\sigma \right] \\
    &= \iota [\underbrace{r \left(\iota(r(\sigma), \alpha)\sigma \right)}_{= r(\sigma)}, \beta ] \iota \left[r(\sigma), \alpha \right] \sigma \\
    &= \iota \left(r(\sigma), \beta \alpha \right) \sigma.
\end{split}
\end{equation*}
It follows that $\cdot$ is a group action. The action is free by injectivity of $\iota$ -- for $\alpha \in \Gamma$, $\sigma \in \Sigma$ we have
\begin{equation*}
\begin{split}
    \alpha \cdot \sigma &= \sigma = e \cdot \sigma\\
    \iff \iota (r(\sigma), \alpha)\sigma &= \sigma = \iota (r(\sigma), e)\sigma\\
    \implies \iota(r(\sigma), \alpha) &= \sigma\sigma^{-1} = \iota(r(\sigma), e) \\
    \implies (r(\sigma), \alpha) &= (r(\sigma), e) \\
    \implies \alpha &= e.
\end{split}
\end{equation*}
Finally it is continuous by construction, as it is a composition of continuous maps.
\end{proof}

This action of $\Gamma$ on $\Sigma$ also satisfies the following useful property:

\begin{lemma}\label{lemma:uniqueaction}
    Given $\sigma, \tau \in \Sigma$ such that $\pi(\sigma) = \pi(\tau)$, there exists a unique element $\alpha \in \Gamma$ such that $\tau = \alpha \cdot \sigma$.
\end{lemma}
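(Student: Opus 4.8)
The plan is to produce the element $\alpha$ by forming the ``quotient'' $\tau\sigma^{-1}$ inside $\Sigma$ and then locating it in the image of $\iota$ via exactness. First I would check that $\tau\sigma^{-1}$ is a legitimate element of $\Sigma$. Writing $\gamma := \pi(\sigma) = \pi(\tau)$, Proposition \ref{beckys_lemma} gives $s(\tau) = s(\pi(\tau)) = s(\gamma) = s(\pi(\sigma)) = s(\sigma)$, which is exactly the range of $\sigma^{-1}$, so the product $\tau\sigma^{-1}$ is composable. Applying the groupoid homomorphism $\pi$ yields $\pi(\tau\sigma^{-1}) = \pi(\tau)\pi(\sigma)^{-1} = \gamma\gamma^{-1} = r(\gamma) \in G^{(0)}$, so $\tau\sigma^{-1} \in \pi^{-1}(G^{(0)})$.

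Next I would invoke the exactness condition of Definition \ref{def:gamma_twist}(ii), which states $\pi^{-1}(G^{(0)}) = \iota(G^{(0)} \times \Gamma)$. Hence there is a pair $(u, \alpha) \in G^{(0)} \times \Gamma$ with $\tau\sigma^{-1} = \iota(u, \alpha)$. To identify $u$, I would compute ranges using Proposition \ref{beckys_lemma} again: on one hand $r(\iota(u,\alpha)) = r(\pi(\iota(u,\alpha))) = r(u) = u$, and on the other hand $r(\tau\sigma^{-1}) = r(\tau) = r(\gamma) = r(\sigma)$. Therefore $u = r(\sigma)$, so $\tau\sigma^{-1} = \iota(r(\sigma), \alpha)$. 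Right-multiplying by $\sigma$ gives $\tau = \iota(r(\sigma), \alpha)\sigma = \alpha \cdot \sigma$, which establishes existence.

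For uniqueness, suppose $\alpha \cdot \sigma = \beta \cdot \sigma$ for some $\alpha, \beta \in \Gamma$. Unwinding the definition of the action gives $\iota(r(\sigma), \alpha)\sigma = \iota(r(\sigma), \beta)\sigma$; right-multiplying by $\sigma^{-1}$ yields $\iota(r(\sigma), \alpha) = \iota(r(\sigma), \beta)$, and injectivity of $\iota$ (Definition \ref{def:gamma_twist}(ii)) forces $\alpha = \beta$. Equivalently, this is just the freeness of the $\Gamma$-action already recorded in Lemma \ref{lemma:groupaction}.

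The argument is essentially bookkeeping, so I do not anticipate a genuine obstacle; the only point requiring care is the range/source arithmetic that both makes $\tau\sigma^{-1}$ composable and pins down the unit $u = r(\sigma)$. Once exactness supplies the pair $(u,\alpha)$, everything reduces to two applications of Proposition \ref{beckys_lemma} and the injectivity of $\iota$.
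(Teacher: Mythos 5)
Your proposal is correct and follows essentially the same route as the paper: form $\tau\sigma^{-1}$, use Proposition \ref{beckys_lemma} to check composability, place $\tau\sigma^{-1}$ in $\pi^{-1}(G^{(0)})$ and apply exactness from Definition \ref{def:gamma_twist}(ii) to write it as $\iota(u,\alpha)$, then pin down $u=r(\sigma)$. The only cosmetic differences are that you identify $u$ by a range computation where the paper uses $u=s(\iota(u,\alpha))=s(\tau\sigma^{-1})$, and you spell out the uniqueness of $\alpha$ via injectivity of $\iota$ (equivalently, freeness from Lemma \ref{lemma:groupaction}), which the paper absorbs into the uniqueness of the pair $(u,\alpha)$ in the exactness condition.
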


\begin{proof}
If $\pi(\sigma) = \pi(\tau)$ then 
$$ s(\tau) = s(\pi(\tau)) = s(\pi(\sigma)) = s(\sigma) = r(\sigma^{-1}), $$
and we have $\pi(\tau)\pi(\sigma)^{-1} = \pi(\tau\sigma^{-1}) \in G^{(0)}$, so that $\tau\sigma^{-1} \in \pi^{-1}\left(G^{(0)} \right)$. By Definition \ref{def:gamma_twist}(ii), there is a unique element $(u, \alpha) \in G^{(0)} \times \Gamma$ such that $\iota(u, \alpha) = \tau\sigma^{-1}$.
Thus $\tau=\iota(u,\alpha)\sigma$, and by Proposition \ref{beckys_lemma}, $u=s(u)=s(\iota(u,\alpha))$, and so $u=r(\sigma)$. Hence, $\tau= \iota(r(\sigma), \alpha)\sigma = \alpha \cdot \sigma$.
\end{proof}

\begin{corollary}
    The mapping $\pi: \Sigma \to G$ is open.
\end{corollary}

In the most common setting, a $\Gamma$-twist $\Sigma$ is a \textit{locally trivial} bundle over $G$, i.e. $\pi^{-1}(U) \cong U \times \Gamma$ for any open subset $U \subseteq G$. For example, in \cite[\S 3]{etaleautomaticsections} it is shown that twists over \'etale groupoids always admit continuous local sections. Results of Gleason \cite{autosections-gleason} and Palais \cite{autosections-palais} (attributed to Serre) show that if $\Gamma$ is a compact Lie group and $G$ is completely regular, then the $\Gamma$-twist admits continuous local sections. Note that locally compact Hausdorff spaces are automatically completely regular by e.g. \cite[\S 33 Ex. 7]{munkres}.

\begin{remark}
The existence of continuous local sections is sufficient to make $\Sigma$ locally trivial; if  $P:U\to  \Sigma$ is a continuous section of $\pi:\Sigma\to G$ over some subset $U$ of $G$, then the mapping
  \[
    \varphi_P :U\times \Gamma\to \pi^{-1}(U),\  (\eta,\alpha)\mapsto \alpha\cdot P(\eta)\,,
  \]
is a homeomorphism.
\end{remark}

However, there are indeed $\Gamma$-twists for which no continuous local sections exist. The following example is exhibited in \cite[Hooptedoodle 4.68]{raeburn-moritaequivalence}:

\begin{example}
    \[
    \begin{tikzcd}
       \{\mathbbm{1}\} \times \prod_{1}^{\infty} \{\pm 1\} && \prod_{1}^{\infty} \T && \prod_1^{\infty} \T
        \arrow[from=1-1, to=1-3, "\iota"]
        \arrow[from=1-3, to=1-5, "\pi"]
    \end{tikzcd}
    \]
    Here all of our groupoids are groups, with $\Sigma=\prod_{1}^{\infty} \T$ realised as a twist by the product of two-element groups $\Gamma = \prod_{1}^{\infty} \{\pm 1\}$ over $G = \Sigma$. We denote by $\mathbbm{1}$ the identity of $\Sigma$. We have $\iota: (\mathbbm{1}, z) \mapsto z$ and $\pi: (z_n)_{n \in \mathbb{N}} \mapsto (z_n^2)_{n \in \mathbb{N}}$. Now, an open set in $\prod_1^{\infty} \T$ must contain a basic open set $(\prod_{n=1}^N U_n) \times (\prod_{N+1}^{\infty}\T)$; however, a continuous section of $\pi$ defined on this set would yield a continuous branch of the square root function on a copy of $\T$ in the product space, which we know does not exist.
\end{example}

\begin{remark}
    Note that if we specified that $\Sigma$ be locally trivial then $\pi$ is automatically an open mapping, by an argument similar to the proof of \cite[Lemma 2.7(a)]{Armstrong_2022}. In that case Definition \ref{def:gamma_twist} is automatically satisfied, and the results demonstrated in this paper still hold.
\end{remark}

\begin{proposition}\label{prop:action_homeo}
Fix an element $\gamma \in G$. Then for each $\sigma \in \pi^{-1}(\gamma)$, the mapping
$$ \rho_{\sigma}: \Gamma \to \pi^{-1}(\gamma),\ \alpha \mapsto \alpha \cdot \sigma $$
is a homeomorphism.
\end{proposition}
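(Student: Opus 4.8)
The plan is to deduce the result almost immediately from Lemma \ref{lem: continuous local section}, by specialising its subset $U \subseteq G$ to the singleton $\{\gamma\}$. First I would check that $\rho_\sigma$ is well defined, i.e. that $\alpha \cdot \sigma$ really lands in $\pi^{-1}(\gamma)$ for every $\alpha \in \Gamma$. Since $\pi$ is a groupoid homomorphism and, by Proposition \ref{beckys_lemma}, $\pi(\iota(r(\sigma), \alpha)) = r(\sigma) = r(\pi(\sigma)) = r(\gamma)$, we compute
\[
    \pi(\alpha \cdot \sigma) = \pi\bigl(\iota(r(\sigma), \alpha)\,\sigma\bigr) = \pi\bigl(\iota(r(\sigma), \alpha)\bigr)\,\pi(\sigma) = r(\gamma)\,\gamma = \gamma,
\]
so $\rho_\sigma$ maps into $\pi^{-1}(\gamma)$ as claimed.

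Next I would establish bijectivity directly from the earlier lemmas. Injectivity is immediate from freeness of the action (Lemma \ref{lemma:groupaction}): if $\alpha \cdot \sigma = \beta \cdot \sigma$ then $(\beta^{-1}\alpha) \cdot \sigma = \sigma$ forces $\beta^{-1}\alpha = e$, whence $\alpha = \beta$. For surjectivity, given any $\tau \in \pi^{-1}(\gamma)$ we have $\pi(\tau) = \gamma = \pi(\sigma)$, so Lemma \ref{lemma:uniqueaction} supplies a (unique) $\alpha \in \Gamma$ with $\tau = \alpha \cdot \sigma = \rho_\sigma(\alpha)$.

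The only remaining content is that both $\rho_\sigma$ and its inverse are continuous, and this is exactly where Lemma \ref{lem: continuous local section} does the work. I would apply it with $U = \{\gamma\}$ and the (trivially continuous) section $P \colon \{\gamma\} \to \Sigma$ defined by $P(\gamma) = \sigma$; this is a section of $\pi$ over $U$ precisely because $\sigma \in \pi^{-1}(\gamma)$. The lemma then asserts that $\varphi \colon \{\gamma\} \times \Gamma \to \pi^{-1}(\{\gamma\}) = \pi^{-1}(\gamma)$, $(\gamma, \alpha) \mapsto \alpha \cdot P(\gamma) = \alpha \cdot \sigma$, is a homeomorphism. Since the canonical map $\Gamma \to \{\gamma\} \times \Gamma$, $\alpha \mapsto (\gamma, \alpha)$, is a homeomorphism onto the product, $\rho_\sigma$ is the composite of two homeomorphisms and hence a homeomorphism.

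I do not anticipate a genuine obstacle here, as Lemma \ref{lem: continuous local section} was stated for arbitrary (not-necessarily-open) subsets $U$ and therefore applies to a singleton; the only point requiring a moment's care is recognising that the fibre $\pi^{-1}(\gamma)$ is exactly $\pi^{-1}(U)$ for this choice of $U$, so that the codomains match. Were Lemma \ref{lem: continuous local section} unavailable, the continuity of $\rho_\sigma^{-1}$ would instead have to be extracted by hand from a local trivialisation $(U_\gamma, P_\gamma, \varphi_{P_\gamma})$ around $\gamma$, by comparing $\sigma$ to $P_\gamma(\gamma)$ through the associated continuous cocycle; that is the step which would carry the real weight, but the lemma packages it away.
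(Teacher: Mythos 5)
Your proposal is correct and follows essentially the same route as the paper, whose entire proof is to apply Lemma \ref{lem: continuous local section} with $U=\set{\gamma}$; your choice of the constant section $P(\gamma)=\sigma$ and the identification $\Gamma \cong \set{\gamma}\times\Gamma$ is exactly what that one-line proof leaves implicit. The additional checks you supply (well-definedness via Proposition \ref{beckys_lemma} and bijectivity via Lemmas \ref{lemma:groupaction} and \ref{lemma:uniqueaction}) are sound, though already packaged inside the cited lemma.
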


\begin{proof}
The continuity and injectivity of $\rho_{\sigma}$ follow from the fact that the action of $\Gamma$ on $\Sigma$ is continuous and free, respectively, by Lemma \ref{lemma:groupaction}. Lemma \ref{lemma:uniqueaction} tells us precisely that $\rho_{\sigma}$ is surjective. Lastly, note that the inverse mapping $\alpha \cdot \sigma = \iota(r(\sigma), \alpha)\sigma \mapsto \alpha$ is a composition of continuous operations: multiplication by $\sigma^{-1}$, followed by the inverse of $\iota$ defined on $\iota(G^{(0)} \times \Gamma)$, and projection onto the second coordinate.
\end{proof}

If furthermore $\Gamma$ is compact, as is the case with a usual twist with $\Gamma = \T$ (see \cite[Lemma~2.2]{courtney2023alexandrov}), then the map $\pi$ is proper.

\begin{proposition}\label{pro: pi is proper}
Suppose the group $\Gamma$ is compact. Then $\pi$ is a proper map.
\end{proposition}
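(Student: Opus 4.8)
The plan is to prove properness directly, by showing that $\pi^{-1}(K)$ is compact for every compact $K \subseteq G$. The two workhorses are the local triviality of Definition \ref{def:gamma_twist}(iii), which supplies homeomorphisms $\varphi_{P_\gamma}: U_\gamma \times \Gamma \to \pi^{-1}(U_\gamma)$, and the compactness of $\Gamma$, which makes the fibre direction compact. I would first record that $\pi \circ \varphi_{P_\gamma}$ is the first-coordinate projection $\mathrm{pr}_1$: indeed $\pi\big(\varphi_{P_\gamma}(\eta,\alpha)\big) = \pi\big(\iota(r(\eta),\alpha)\big)\,\pi\big(P_\gamma(\eta)\big) = r(\eta)\,\eta = \eta$ by Proposition \ref{beckys_lemma} and $\pi\circ P_\gamma = \id$. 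Consequently, for any set $S \subseteq U_\gamma$ one has $\pi^{-1}(S) = \varphi_{P_\gamma}(S \times \Gamma)$.

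Fix a compact $K \subseteq G$ and, for each $\gamma \in K$, a trivialising open neighbourhood $U_\gamma$ as in Definition \ref{def:gamma_twist}(iii). Since $G$ is locally compact Hausdorff, I would shrink each chart to an open set $V_\gamma$ with $\gamma \in V_\gamma$ and compact closure $\overline{V_\gamma} \subseteq U_\gamma$. The family $\set{V_\gamma}_{\gamma \in K}$ covers $K$, so compactness yields $\gamma_1, \dots, \gamma_n$ with $K \subseteq \bigcup_{i=1}^n V_{\gamma_i}$. Setting $K_i := \overline{V_{\gamma_i}} \cap K$, each $K_i$ is a closed subset of the compact set $K$ and hence compact, satisfies $K_i \subseteq U_{\gamma_i}$, and $K = \bigcup_{i=1}^n K_i$.

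It then remains to assemble the pieces. Using the observation above, $\pi^{-1}(K_i) = \varphi_{P_{\gamma_i}}(K_i \times \Gamma)$, and $K_i \times \Gamma$ is compact as a product of the compact sets $K_i$ and $\Gamma$; its image under the continuous map $\varphi_{P_{\gamma_i}}$ is therefore compact. Since $\pi^{-1}(K) = \bigcup_{i=1}^n \pi^{-1}(K_i)$ is a finite union of compact sets, it is compact, and $\pi$ is proper.

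The only genuine obstacle is the shrinking step. The naive pieces $K \cap U_{\gamma_i}$ need not be compact, so one cannot feed a compact box into the chart homeomorphism directly; local compactness of $G$ is precisely what allows each $U_{\gamma_i}$ to be replaced by a relatively compact $V_{\gamma_i}$, thereby carving $K$ into compact subsets each lying inside a single trivialising chart. Everything else reduces to the compactness of $\Gamma$ and the continuity of the trivialisations.
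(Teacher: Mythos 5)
Your proof is correct and follows essentially the same route as the paper's: cover the compact set $K$ by trivialising charts, pass to a finite subcover, decompose $K$ into compact pieces each lying in a single chart, and use compactness of $\Gamma$ together with the homeomorphisms $\varphi_{P_{\gamma_i}}$ to conclude. If anything, your write-up is slightly more careful than the paper's, which asserts the existence of the compact sets $K_{\gamma_i} \subseteq U_{\gamma_i}$ with $K = \bigcup_i K_{\gamma_i}$ without spelling out the shrinking argument via relatively compact open sets $V_{\gamma_i}$ that you supply.
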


\begin{proof}
    Take any compact subset $K \subseteq G$, and an open cover $\{U_i \mid i \in I\}$ of $\pi^{-1}(K)$. For each $\gamma \in K$, by Proposition \ref{prop:action_homeo} the fibre $\pi^{-1}(\gamma)$ is compact, and so there is a finite set of indices $I_{\gamma}$ such that $\{U_{i} \mid i \in I_{\gamma}\}$ covers $\pi^{-1}(\gamma)$. Set $V_{\gamma} := \bigcap_{i \in I_{\gamma}} U_i$. Then since $\pi$ is an open mapping, $\{\pi(V_{\gamma}) \mid \gamma \in K\}$ is an open cover of $\pi(\pi^{-1}(K)) = K$. By compactness of $K$ there is a finite subcovering $\{\pi(V_i) \mid 1 \leq i \leq n\}$. But now $K \subseteq \bigcup_{i=1}^n \pi(V_i)$ implies $\pi^{-1}(K) \subseteq \pi^{-1}\left(\bigcup_{i=1}^n \pi(V_i)\right) = \bigcup_{i=1}^n \pi^{-1}(\pi(V_i))$. By continuity, these preimages are open and hence yield the desired finite subcovering.
\end{proof}

\begin{convention}\label{convention, global: gamma twist}
From now on, we shall let $\Gamma$ be a locally compact Hausdorff abelian group and let $(\Sigma, \iota, \pi)$ be a $\Gamma$-twist over some locally compact Hausdorff groupoid $G$ as in Definition \ref{def:gamma_twist}, whose $\Gamma$-action $(\alpha,\sigma)\mapsto \alpha\cdot \sigma$ is defined as in Lemma \ref{lemma:groupaction}. In fact, although the material of the next section holds for all locally compact $\Gamma$, at the level of $*$-algebras we will see that it suffices to consider only compact $\Gamma$.
\end{convention}

\section{Haar Systems}\label{section:Haar}

For operations in the $C^*$-algebras which we associate to a $\Gamma$-twist, it will be necessary to integrate functions on $\Sigma$; in the case of usual twists the existence of a Haar system is ensured (as long as one exists for the underlying groupoid $G$) by a well-known result which is proved in \cite[Lemma 2.4]{courtney2023alexandrov}. We develop an analogous result for $\Gamma$-twists. Moving forward we will assume there exists a Haar system $\set{\lambda^u}_{u \in G^{(0)}}$ for $G$. If $G$ is an \'{e}tale groupoid, meaning the range and source maps are local homeomorphisms, then we can always use the Haar system consisting of counting measures; for proof see \cite[Prop. 1.29]{toolkit}. We will denote by $\lambda$ the  Haar measure on $\Gamma$.

Let $\sfrak: G \to \Sigma$ be a (not necessarily continuous) section of $\pi$. For each $f\in \Ccal_c(\Sigma)$, define a function $\average{f}: G \to \C$ by 
\[
    \average{f}(\gamma) = \int_{\Gamma} f(\alpha \cdot \sfrak(\gamma)) d\lambda(\alpha).
\]

\begin{lemma}\label{lem: for thm:haarsystemobtained}
For each $f\in \Ccal_c(\Sigma)$, the function $\average{f}$ is well-defined, independent of the choice of $\sfrak: G \to \Sigma$, and belongs to $\Ccal_c(G)$. 
\end{lemma}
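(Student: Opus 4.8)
The plan is to verify the three assertions in turn, leaving continuity for last since it is the crux. For well-definedness and integrability, I would fix $\gamma \in G$ and note that the integrand $\alpha \mapsto f(\alpha \cdot \sfrak(\gamma))$ is continuous, being the composite of the continuous action of Lemma \ref{lemma:groupaction} with $f$. By Proposition \ref{prop:action_homeo} the map $\rho_{\sfrak(\gamma)}$ is a homeomorphism of $\Gamma$ onto $\pi^{-1}(\gamma)$, and the support of the integrand is $\rho_{\sfrak(\gamma)}^{-1}(\support(f) \cap \pi^{-1}(\gamma))$; since $\support(f) \cap \pi^{-1}(\gamma)$ is a closed subset of the compact set $\support(f)$, it is compact, so the integrand lies in $\Ccal_c(\Gamma)$ and the integral against the (Radon) Haar measure $\lambda$ converges.

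For independence of the section, let $\sfrak'$ be a second section of $\pi$. Since $\pi(\sfrak'(\gamma)) = \gamma = \pi(\sfrak(\gamma))$, Lemma \ref{lemma:uniqueaction} supplies a ($\gamma$-dependent) $\beta \in \Gamma$ with $\sfrak'(\gamma) = \beta \cdot \sfrak(\gamma)$. Using the relation $\alpha \cdot (\beta \cdot \sigma) = (\alpha\beta)\cdot\sigma$ read off from the computation in Lemma \ref{lemma:groupaction} together with the translation invariance of $\lambda$, the substitution $\alpha \mapsto \alpha\beta^{-1}$ shows the two integrals coincide. For compact support, I would first record that $\pi(\alpha\cdot\sigma) = \pi(\sigma)$ for all $\alpha$ and $\sigma$, since $\pi(\alpha\cdot\sigma) = \pi(\iota(r(\sigma),\alpha))\pi(\sigma) = r(\pi(\sigma))\pi(\sigma) = \pi(\sigma)$ by Proposition \ref{beckys_lemma}. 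Hence if $\gamma \notin \pi(\support(f))$ then $\alpha\cdot\sfrak(\gamma)$ lies outside $\support(f)$ for every $\alpha$, giving $\average{f}(\gamma)=0$; thus $\average{f}$ vanishes off the compact set $\pi(\support(f))$.

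The main obstacle is continuity, the difficulty being that $\sfrak$ need not be continuous and $\Gamma$ need not be compact, so the domain of integration must be controlled uniformly in the parameter. Here I would use the independence just proved to localise: fix $\gamma_0 \in G$, choose a local trivialisation $(U_{\gamma_0}, P_{\gamma_0}, \varphi_{P_{\gamma_0}})$ and a compact neighbourhood $K_0 \subseteq U_{\gamma_0}$ of $\gamma_0$, and replace $\sfrak$ by the continuous section $P_{\gamma_0}$, so that $\average{f}(\gamma) = \int_\Gamma f(\alpha\cdot P_{\gamma_0}(\gamma))\,d\lambda(\alpha)$ for $\gamma \in K_0$, with $(\gamma,\alpha)\mapsto f(\alpha\cdot P_{\gamma_0}(\gamma))$ now jointly continuous. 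The key step is a uniform support bound: the set $\support(f)\cap\pi^{-1}(K_0)$ is a closed subset of $\support(f)$, hence compact, so its image under the homeomorphism $\varphi_{P_{\gamma_0}}^{-1}$ of Lemma \ref{lem: continuous local section} is compact, and its projection $L$ onto $\Gamma$ is therefore compact. The computation above shows that for every $\gamma \in K_0$ the integrand vanishes for $\alpha \notin L$. Restricting the jointly continuous integrand to the compact set $K_0\times L$ gives uniform continuity, and since $\lambda(L) < \infty$ the estimate $\abs{\average{f}(\gamma) - \average{f}(\gamma_0)} \le \lambda(L)\sup_{\alpha\in L}\abs{f(\alpha\cdot P_{\gamma_0}(\gamma)) - f(\alpha\cdot P_{\gamma_0}(\gamma_0))}$ yields continuity at $\gamma_0$. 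As $\gamma_0$ was arbitrary, $\average{f} \in \Ccal_c(G)$.
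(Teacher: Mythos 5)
Your proof is correct and follows essentially the same route as the paper's: independence of the section via Lemma \ref{lemma:uniqueaction} and translation invariance of $\lambda$, the support bound $\support \average{f} \subseteq \pi(\support f)$, and continuity by passing locally to the continuous section $P_{\gamma_0}$ and trapping the supports of all nearby integrands in one compact subset of $\Gamma$ so that a uniform-continuity estimate applies. Your explicit construction of that compact set $L$ as the $\Gamma$-projection of $\varphi_{P_{\gamma_0}}^{-1}\left(\support(f)\cap\pi^{-1}(K_0)\right)$ merely makes precise the paper's unexplained choice of the compact set $A$, and your $\rho_{\sfrak(\gamma)}$ argument spells out the fibrewise integrability that the paper leaves implicit.
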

\begin{proof}
First, take $\gamma\in G$ and $\sigma\in \pi^{-1}(\gamma)$. Then there is some $\alpha_{\eta} \in \Gamma$ with $\sfrak(\gamma) = \alpha_{\eta} \cdot \sigma$. Hence
    \begin{equation*}
    \begin{split}
        \average{f}(\gamma) &:= \int_{\Gamma} f(\alpha \cdot \sfrak(\gamma)) d\lambda(\alpha) \\
        &= \int_{\Gamma} f(\alpha \alpha_{\eta} \cdot \sigma) d\lambda(\alpha)= \int_{\Gamma} f(\beta \cdot \sigma) d\lambda(\beta),
    \end{split}
    \end{equation*}
 because $\lambda$ is invariant. This shows that the definition of $\average{f}(\gamma)$ does not depend on the particular choice of the value of $\sfrak(\gamma)$ in $\pi^{-1}(\gamma)$.

For open $U \subseteq \mathbb{C}$ we have
\begin{equation*}
\begin{split}
    \pi^{-1}(f_G^{-1}(U)) &= \{\sigma \in \Sigma \mid \int_{\Gamma} f(\alpha \cdot \sfrak(\pi(\sigma))) d\lambda(\alpha) \in U \} \\
    &= \{\sigma \in \Sigma \mid \int_{\Gamma} f(\alpha \cdot \sigma)d\lambda(\alpha) \in U\},
\end{split}
\end{equation*}
which is open, as the preimage of $U$ under the continuous mapping $F: \Sigma \to \mathbb{C},\ \sigma \mapsto \int_{\Gamma} f(\alpha \cdot \sigma) d\lambda(\alpha)$. To see that this map $F$ is continuous, let $\varepsilon>0$, fix $\sigma_0 \in \Sigma$ and choose a precompact open neighbourhood $U$ of $\sigma_0$. First note that
$$ K:= \{\alpha \in \Gamma \mid \alpha \cdot \sigma \in \textnormal{supp}f\textnormal{ for some }\sigma \in \overline{U}\} $$
is compact; for any net $(\alpha_i)_{i \in I}$ in $K$ there is a corresponding net $(\alpha_i \cdot \sigma_i)_{i \in I}$ in $\textnormal{supp}f$. Then since $\textnormal{supp}f$ is compact we can pass to a subnet (and re-label) to get $\alpha_i \cdot \sigma_i \to \tau$ for some $\tau \in \textnormal{supp}f$. Now $(\sigma_i)_{i \in I}$ is a net in $\overline{U}$ so we can pass again to a subnet, re-label and assume $\sigma_i \to \sigma$ for some $\sigma \in \overline{U}$. It follows that $\sigma_i^{-1} \to \sigma^{-1}$ and hence $\iota(r(\sigma_i), \alpha_i) = \alpha_i \cdot \sigma_i \sigma_i^{-1} \to \tau\sigma^{-1}$. Since $\iota$ is a homeomorphism onto its image that implies the net $(r(\sigma_i), \alpha_i)_{i \in I}$ is convergent in $G^{(0)} \times \Gamma$, and hence $(\alpha_i)_{i \in I}$ converges in $\Gamma$.

Now, since $K$ is compact we have $\lambda(K) < \infty$, and since $f$ is compactly supported there is an open neighbourhood $V$ of $\sigma_0$ such that 
$$ \sigma \in V \implies |f(\alpha \cdot \sigma) - f(\alpha \cdot \sigma_0)| < \frac{\varepsilon}{\lambda(K)+1} $$
for every $\alpha \in K$. It follows that
    $$ \left| \int_{\Gamma} f(\alpha \cdot \sigma) d\lambda(\alpha) - \int_{\Gamma} f(\alpha \cdot \sigma_0) d\lambda(\alpha) \right| \leq \int_K \left|f(\alpha \cdot \sigma) - f(\alpha \cdot \sigma_0) \right| d\lambda(\alpha) < \varepsilon.$$

To conclude that $f_G$ is continuous, note that since $\pi$ is surjective and open, the set $\pi\left( \pi^{-1}(f_G^{-1}(U))\right) = f_G^{-1}(U)$ is open.

Finally, if $\average{f}(\gamma) \neq 0$ for some $\gamma \in G$ then there exists $\alpha \in \Gamma$ with $\alpha \cdot \sfrak(\gamma) \in \textnormal{supp }f$, which implies $\pi(\alpha \cdot\sfrak(\gamma)) = \gamma \in \pi(\textnormal{supp }f)$. Thus supp $\average{f} \subseteq \pi(\textnormal{supp }f)$; but $f \in \Ccal_c(\Sigma)$ and $\pi$ is continuous, therefore $\pi(\textnormal{supp }f)$, and hence also supp $\average{f}$, is compact. 
\end{proof}

Next, fix a unit $u \in G^{(0)}$ and define $I^u: \Ccal_c(\Sigma) \to \C$ by
\[
I^u(f) =\int_{G^u}\average{f}(\gamma) d\lambda^u(\gamma)= \int_{G^u} \int_{\Gamma} f(\alpha \cdot \sfrak(\gamma)) d\lambda(\alpha) d\lambda^u(\gamma).  \]
Then, by the previous lemma, $I^u$ is a well-defined positive linear functional on $\Ccal_c(\Sigma)$. Riesz' Representation Theorem (\cite[Thm. 7.2.8]{cohn_measure}) then ensures for each $u \in \Sigma^{(0)}$ the existence of a unique Radon measure $\mu^u$ on $\Sigma$ which satisfies
\begin{equation}\label{eqn:measuredef}
\int_{\Sigma} f(\sigma) d\mu^u(\sigma) = \int_{G^u} \int_{\Gamma} f(\alpha \cdot \sfrak(\gamma)) d\lambda(\alpha) d\lambda^u(\gamma).
\end{equation}

\begin{theorem}\label{thm:haarsystemobtained}
    Let $\Gamma$ be a locally compact Hausdorff abelian group and let $(\Sigma, \iota, \pi)$ be a $\Gamma$-twist over a locally compact Hausdorff groupoid $G$ with Haar system $\{\lambda^u\}_{u \in G^{(0)}}$. Then the collection $\set{\mu^u}_{u \in \Sigma^{(0)}}$ as defined above is a Haar system for $\Sigma$.
\end{theorem}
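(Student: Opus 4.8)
The plan is to verify the three defining properties of a Haar system for $\Sigma$: that each $\mu^u$ is a Radon measure whose support is exactly $\Sigma^u := \set{\sigma \in \Sigma : r(\sigma) = u}$, that $u \mapsto \int_\Sigma f\, d\mu^u$ is continuous for every $f \in \Ccal_c(\Sigma)$, and that the family is left invariant. The measures are Radon by construction via the Riesz Representation Theorem, so only these three points require work. Continuity is immediate: by \eqref{eqn:measuredef} and Lemma \ref{lem: for thm:haarsystemobtained} we have $\int_\Sigma f\, d\mu^u = \int_{G^u} \average{f}(\gamma)\, d\lambda^u(\gamma)$ with $\average{f} \in \Ccal_c(G)$, and since $\set{\lambda^u}$ is a Haar system for $G$ the right-hand side varies continuously in $u$.

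For the support, I would first note that Proposition \ref{beckys_lemma} gives $r(\sigma) = r(\pi(\sigma))$, so $\Sigma^u = \pi^{-1}(G^u)$. As $\gamma$ ranges over $G^u$ and $\alpha$ over $\Gamma$, Proposition \ref{prop:action_homeo} shows that $\alpha \cdot \sfrak(\gamma)$ ranges over $\pi^{-1}(G^u) = \Sigma^u$, so $\mu^u$ is carried by $\Sigma^u$; indeed any $f$ with $\support f \cap \Sigma^u = \emptyset$ has $\average{f}$ vanishing on $G^u$. Conversely, given $\sigma_0 \in \Sigma^u$ and a neighbourhood $O$, I would choose $f \in \Ccal_c(\Sigma)$ with $0 \le f$, $f(\sigma_0) > 0$ and $\support f \subseteq O$; since Haar measure $\lambda$ on $\Gamma$ and the measure $\lambda^u$ (which is supported on $G^u$) both have full support, the continuous nonnegative function $\average{f}$ is strictly positive at $\pi(\sigma_0)$, whence $\int_\Sigma f\, d\mu^u > 0$ and $\sigma_0 \in \support\mu^u$.

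The substantial step, and the one I expect to be the main obstacle, is left invariance:
$$\int_\Sigma f(\sigma\tau)\, d\mu^{s(\sigma)}(\tau) = \int_\Sigma f(\tau)\, d\mu^{r(\sigma)}(\tau) \qquad (\sigma \in \Sigma,\ f \in \Ccal_c(\Sigma)).$$
I would first record the algebraic identity that the $\Gamma$-action commutes with left translation, namely $\sigma(\alpha \cdot \tau) = \alpha \cdot (\sigma\tau)$ whenever $s(\sigma) = r(\tau)$, which follows from the centrality axiom (iv) of Definition \ref{def:gamma_twist} via $\sigma\,\iota(s(\sigma),\alpha) = \iota(r(\sigma),\alpha)\,\sigma$. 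Writing the left-hand side through \eqref{eqn:measuredef} with a section $\sfrak$ and $u = s(\sigma)$, this identity lets me replace $f(\sigma(\alpha\cdot\sfrak(\gamma)))$ by $f(\alpha\cdot(\sigma\sfrak(\gamma)))$. Since $\pi(\sigma\sfrak(\gamma)) = \pi(\sigma)\gamma$, the inner $\Gamma$-integral is precisely $\average{f}(\pi(\sigma)\gamma)$ by the section-independence established in Lemma \ref{lem: for thm:haarsystemobtained}.

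The left-hand side thus becomes $\int_{G^u} \average{f}(\pi(\sigma)\gamma)\, d\lambda^u(\gamma)$. Applying the left invariance of the Haar system $\set{\lambda^u}$ for $G$ to the element $\pi(\sigma)$ and the function $\average{f} \in \Ccal_c(G)$ — noting $s(\pi(\sigma)) = u$ and $r(\pi(\sigma)) = r(\sigma)$ by Proposition \ref{beckys_lemma} — yields $\int_{G^{r(\sigma)}} \average{f}\, d\lambda^{r(\sigma)} = \int_\Sigma f\, d\mu^{r(\sigma)}$, as required. I anticipate the only delicate points to be the range/source bookkeeping through Proposition \ref{beckys_lemma} and the reduction of the inner integral to $\average{f}$, rather than any genuine analytic difficulty, since the heavy lifting is carried out by the invariance already assumed for $G$.
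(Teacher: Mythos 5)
Your proposal is correct and follows essentially the same route as the paper's proof: continuity via $\average{f} \in \Ccal_c(G)$ and the Haar system on $G$, the support argument via positivity of $\average{f}$ at points of $G^u = \support\lambda^u$, and left invariance via the centrality identity $\sigma(\alpha\cdot\tau) = \alpha\cdot(\sigma\tau)$, the recognition of the inner integral as $\average{f}(\pi(\sigma)\gamma)$, and the left invariance of $\set{\lambda^u}$. The only cosmetic difference is that you phrase the support step through neighbourhoods of a point $\sigma_0$ while the paper works directly with functions positive at $\sigma_0$; the substance is identical.
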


\begin{proof}
    First, for every function $f \in \Ccal_c(\Sigma)$, since $\average{f}\in \Ccal_c(G)$  and $\set{\lambda^u}_{u \in G^{(0)}}$ is a Haar system on $G$, by definition the mapping $G^{(0)}  \to \C$,
    \[
        u \mapsto \int_G \average{f}(\gamma) d\lambda^u(\gamma)
    \]
    is continuous, which becomes the mapping
    \[
        \Sigma^{(0)} \to \C, \ \ u \mapsto \int_{\Sigma} f(\sigma) d\mu^u(\sigma)\,
    \]
    under the identification $\Sigma^{(0)}\equiv G^{(0)}$.

    Next, let $\sigma \in \Sigma^u$ and let $f\in \Ccal_c(\Sigma)$ with $f\ge 0$ on $\Sigma$ and $f(\sigma)>0$. Then, by Lemma \ref{lem: for thm:haarsystemobtained},
    \[
        \average{f}(\pi(\sigma))= \int_{\Gamma} f(\alpha \cdot \sigma) d\lambda(\alpha)>0
    \]
    and so since $\pi(\sigma)\in G^{u}=\support\lambda^u$, we have
    \[
        I^u(f)=\int_{G^{u}} \average{f}(\gamma) d\lambda^u(\gamma)>0\,.
    \]
    This shows that $\Sigma^u\subseteq \support(f)$. To show the reverse inclusion, notice that if $f\in \Ccal_c(\Sigma)$ and $f|_{\Sigma^u}=0$, then for every $\gamma\in G^u$, we have
    \[
        \average{f}(\gamma)=\int_{\Gamma} f(\alpha \cdot \sfrak(\gamma)) d\lambda(\alpha)=0
    \]
    and so $I^u(f)=0$.

    Finally, to prove the left-invariance of the measures, for $\sigma \in \Sigma$ and $f \in \Ccal_c(\Sigma)$ we need to show
    $$ \int f(\sigma\tau) d\mu^{s(\sigma)}(\tau) = \int f(\tau) d\mu^{r(\sigma)}(\tau). $$
    To see this, note that
    \begin{align*}
    \int f(\sigma\tau) d\mu^{s(\sigma)}(\tau) &= \int_{G^{s(\sigma)}} \int_{\Gamma} f(\sigma(\alpha \cdot \sfrak(\gamma))) d\lambda(\alpha) d\lambda^{s(\sigma)}(\gamma) \\
    &= \int_{G^{s(\sigma)}} \int_{\Gamma} f(\alpha \cdot \sigma\sfrak(\gamma)) d\lambda(\alpha)d\lambda^{s(\sigma)}(\gamma)\\
    &=\int_{G^{s(\sigma)}} \average{f}(
    \pi(\sigma)\gamma) d\lambda^{s(\sigma)}(\gamma)\\
     &=\int_{G^{r(\sigma)}} \average{f}(
    \gamma) d\lambda^{r(\sigma)}(\gamma)\\
    &= \int f(\tau) d\mu^{r(\sigma)}(\tau). \qedhere
    \end{align*}
\end{proof}

\begin{convention}\label{convention, global: haar systems}
In addition to Convention \ref{convention, global: gamma twist}, from now on, we suppose that our locally compact Hausdorff groupoid $G$ admits a Haar system that is denoted by $\set{\lambda^u}_{u \in G^{(0)}}$, denote a fixed Haar measure on $\Gamma$ by $\lambda$ (which is normalised when $\Gamma$ is compact), and denote the induced Haar system on $\Sigma$ as defined above by $\set{\mu^u}$. Note that the Haar system $\set{\lambda^u}_{u \in G^{(0)}}$ for $G$ is not guaranteed to exist unless $G$ is second countable and its range map $r$ is open (for details see e.g. \cite{deitmar2017haar}). Also denote by $\sfrak:G\to \Sigma$ a (not necessarily continuous) section of $\pi:\Sigma\to G$.
\end{convention}

\section{The \texorpdfstring{$*$}{*}-algebra \texorpdfstring{$\Ccal_c(\Sigma; G; \chi)$}{Cc(Sigma; G; chi)}}\label{section:staralgebra}

We now wish to explore what kind of $C^*$-algebra can be associated to a $\Gamma$-twist. In the case of a usual twist $\Sigma$ over a groupoid $G$, one constructs the \textit{twisted groupoid }$C^*$\textit{-algebra} $C^*(\Sigma; G)$ by completing the space
$$ \Ccal_c(\Sigma; G) := \set{ f \in \Ccal_c(\Sigma) \mid f(z \cdot \sigma) = zf(\sigma) \text{ for all } \sigma \in \Sigma \text{ and } z \in \T }. $$

But of course, with $\Ccal_c(\Sigma)$ being a space of mappings $\Sigma \to \C$, for $\alpha \in \Gamma$ it only makes sense to write $\alpha f(\sigma)$ whenever $\Gamma \subseteq \C$. One way to move forward is to embed $\Gamma$ in $\C$; this can be done via a \textit{character}, i.e. a continuous homomorphism $\Gamma\to\T$. See for example \cite[Ch. 3--4]{abstract_harmonic} for more details. 
Note that the set $\widehat{\Gamma}$ of characters of $\Gamma$, with the pointwise product and the topology of uniform convergence on compacts, is again a locally compact Hausdorff abelian group, and the Pontryagin duality states that there is a canonical isomorphism of topological groups $\Gamma \cong \widehat{\widehat{\Gamma}}$ (see e.g. \cite{pontryagin_duality}).

In particular, unless $\Gamma$ is trivial, it always has (plenty of) non-trivial characters; when $\chi$ is the trivial character, we will just recover the standard construction of $\Ccal_c(G)$, the groupoid algebra.

Now, let $\chi$ be a fixed character on $\Gamma$ and define
\[
\Ccal_c(\Sigma; G; \chi) := \set{ f \in \Ccal_c(\Sigma) \mid f(\alpha \cdot \sigma) = \chi(\alpha) f(\sigma)  \text{ for all } \sigma \in \Sigma  \text{ and } \alpha \in \Gamma }.
\]

This \textit{equivariance} property, as in the usual $\T$-twist case, tells us that evaluation of a function at one point is enough to determine the function's values at all points in the same fibre. In other words, it can be viewed as a function on $G$. This aligns with the equivalent description of a twisted groupoid $C^*$-algebra in terms of sections of a particular line bundle over $G$, see e.g., \cite{Kumjian_1986,Renault_IMSbulletin,BFPR_graded}.

\begin{example}
Consider the $\Z_2$-twist $(\Z_4, \iota, \pi)$ over $\Z_2$, where each space is endowed with the discrete topology and the homomorphisms are
\begin{equation*}
\begin{split}
    &\iota: ([0]_2, [k]_2) \mapsto [2k]_4 \\
    &\pi: [k]_4 \mapsto [k]_2. 
\end{split}
\end{equation*}
Then $\Z_2$ acts on $\Z_4$ as follows:
\begin{equation*}
\begin{split}
    [0]_2 \cdot [k]_4 &= [k]_4 \\
    [1]_2 \cdot [k]_4 &= [k+2]_4.
\end{split}
\end{equation*}

The mapping $\chi: \Z_2 \to \T,\ [k]_2 \mapsto e^{k\pi i}$ is a character of $\Z_2$ -- in this case, since we are dealing with finite discrete spaces, $\support f$ is always compact for $f: \Z_4 \to \C$, which itself is always continuous. Thus the elements of $\Ccal_c(\Z_4; \Z_2; \chi)$ are functions $f: \Z_4 \to \C$ satisfying
\begin{equation*}
\begin{split}
    f([0]_2 \cdot [k]_4) = f([k]_4) &\textnormal{ and } f([1]_2 \cdot [k]_4) = -f([k]_4) \\
    \iff f([k+2]_4) &= -f([k]_4),
\end{split}
\end{equation*}
that is, 
$$ \Ccal_c(\Z_4; \Z_2; \chi) = \set{ f \in \C^{\Z_4} \mid f([k+2]_4) = -f([k]_4) \text{ for all } k \in \Z}. $$
To see that the space is non-trivial, notice $[k]_4 \mapsto e^{\frac{k\pi i}{2}}$ is an element.
\end{example}

\begin{example}
    If $(\Sigma, \iota, \pi)$ is a twist over a groupoid $G$ then $\id_{\T}$ is a character and gives the usual space:
    $$ \Ccal_c(\Sigma; G; \id_{\T}) = \Ccal_c(\Sigma; G). $$
\end{example}

\begin{remark}
Notice that $\Ccal_c(\Sigma; G; \chi)$ contains non-zero functions only if $\Gamma$ is compact. To see this take any $f \in \Ccal_c(\Sigma; G; \chi)$ such that $f(\sigma) \neq 0$ for some $\sigma \in \Sigma$. Then for any $\tau \in \pi^{-1}\left( \pi(\sigma) \right)$, by Lemma \ref{lemma:uniqueaction} there is a unique $\alpha \in \Gamma$ such that $\tau = \alpha \cdot \sigma$ and we have
    $$ f(\tau) = f(\alpha \cdot \sigma) = \chi(\alpha) f(\sigma) \neq 0 $$
    so that $\pi^{-1}\left( \pi(\sigma) \right) \subseteq \support f$. Since $f$ is compactly supported, and Proposition \ref{prop:action_homeo} implies $\pi^{-1}\left( \pi(\sigma) \right)$ is closed, it follows that $\pi^{-1}\left( \pi(\sigma) \right)$ is compact.
\end{remark}

Suppose for the remainder of this section that $\Gamma$ is compact (and that, as customary in this case, its Haar measure $\lambda$ is normalised i.e. $\lambda(\Gamma) = 1$).  

\begin{remark}
    Calling back to Theorem \ref{thm:haarsystemobtained} we make note of a particular nice property for functions of this type. If $f \in \Ccal_c(\Sigma; G; \chi)$ and the character $\chi$ is non-trivial, then
    \begin{equation*}
    \begin{split}
        \int_{\Sigma} f(\sigma) d\mu^u(\sigma) &:= \int_{G^u} \int_{\Gamma} f(\alpha \cdot \sfrak(\gamma)) d\lambda(\alpha) d\lambda^u(\gamma) \\
        &= \int_{G^u} \int_{\Gamma} \chi(\alpha) f(\mathfrak{s(\gamma)}) d\lambda(\alpha) d\lambda^u(\gamma) \\
        &= \underbrace{\int_{\Gamma} \chi(\alpha) d\lambda(\alpha)}_{=0} \int_{G^u} f(\sfrak(\gamma)) d\lambda^u(\gamma) = 0.
    \end{split}
    \end{equation*}
\end{remark}

Finally, we move closer to building our $C^*$-algebras. Recall now that the function space $\Ccal_c(\Sigma)$, together with the convolution
    \[ f*g(\sigma) := \int_{\Sigma} f(\tau) g(\tau^{-1} \sigma) d\mu^{r(\sigma)}(\tau)
    \]
and involution
    \[
    f^*(\sigma) := \overline{{f(\sigma^{-1})}}
    \]
forms a $*$-algebra. This is proved e.g. in \cite[Prop. II.1.1]{renault_thesis}.

\begin{proposition}\label{prop: algebra of equivariant Cc}
    The space $\Ccal_c(\Sigma; G; \chi)$ is a $*$-ideal of $\Ccal_c(\Sigma)$.
\end{proposition}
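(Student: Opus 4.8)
The plan is to verify the three defining features of a $*$-ideal in turn: that $\Ccal_c(\Sigma; G; \chi)$ is a linear subspace of $\Ccal_c(\Sigma)$, that it is closed under the involution $f \mapsto f^*$, and that it absorbs convolution on both sides by arbitrary elements of $\Ccal_c(\Sigma)$. The subspace property is immediate, since the equivariance condition $f(\alpha\cdot\sigma) = \chi(\alpha)f(\sigma)$ is preserved under linear combinations. Throughout I would lean on two algebraic identities for the $\Gamma$-action that I would record first: the inversion rule $(\alpha\cdot\sigma)^{-1} = \alpha^{-1}\cdot\sigma^{-1}$, which follows by writing $\alpha\cdot\sigma = \sigma\,\iota(s(\sigma),\alpha)$, inverting, and using that $\iota$ is a groupoid homomorphism so that $\iota(s(\sigma),\alpha)^{-1} = \iota(s(\sigma),\alpha^{-1})$; and the multiplicativity rule $\alpha\cdot(\sigma\tau) = (\alpha\cdot\sigma)\tau = \sigma\,(\alpha\cdot\tau)$, which is immediate from the two expressions for the action in Lemma \ref{lemma:groupaction} together with $r(\sigma\tau)=r(\sigma)$ and $s(\sigma\tau)=s(\tau)$.

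For closure under involution, I would compute, using the inversion rule and the equivariance of $f$,
\[
    f^*(\alpha\cdot\sigma) = \overline{f\big((\alpha\cdot\sigma)^{-1}\big)} = \overline{f(\alpha^{-1}\cdot\sigma^{-1})} = \overline{\chi(\alpha^{-1})}\;\overline{f(\sigma^{-1})}.
\]
Since $\chi$ takes values in $\T$, we have $\chi(\alpha^{-1}) = \chi(\alpha)^{-1} = \overline{\chi(\alpha)}$, hence $\overline{\chi(\alpha^{-1})} = \chi(\alpha)$, and the expression collapses to $\chi(\alpha)f^*(\sigma)$. Thus $f^* \in \Ccal_c(\Sigma; G; \chi)$.

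For the ideal property, note first that for $f \in \Ccal_c(\Sigma; G; \chi)$ and $g \in \Ccal_c(\Sigma)$ the convolutions $f*g$ and $g*f$ already lie in $\Ccal_c(\Sigma)$, as it is a $*$-algebra, so only equivariance must be checked. Using $r(\alpha\cdot\sigma)=r(\sigma)$ and the identity $\tau^{-1}(\alpha\cdot\sigma) = \alpha\cdot(\tau^{-1}\sigma)$ (a case of the multiplicativity rule), the right-multiplication side is immediate:
\[
    (g*f)(\alpha\cdot\sigma) = \int_\Sigma g(\tau)\,f\big(\alpha\cdot(\tau^{-1}\sigma)\big)\,d\mu^{r(\sigma)}(\tau) = \chi(\alpha)\,(g*f)(\sigma).
\]
For $f*g$ I would perform the substitution $\tau\mapsto\alpha\cdot\tau$, which is legitimate because each $\mu^{u}$ is $\Gamma$-invariant; I would establish this invariance from the defining formula \eqref{eqn:measuredef} together with the translation invariance of the Haar measure $\lambda$ on $\Gamma$. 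After the substitution one applies $f(\alpha\cdot\tau)=\chi(\alpha)f(\tau)$ and the identity $(\alpha\cdot\tau)^{-1}(\alpha\cdot\sigma)=\tau^{-1}\sigma$ (combining the inversion and multiplicativity rules) to extract a single factor $\chi(\alpha)$, yielding $(f*g)(\alpha\cdot\sigma) = \chi(\alpha)(f*g)(\sigma)$.

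The genuinely load-bearing step, and the one I would present most carefully, is the $\Gamma$-invariance of the measures $\mu^u$, since it is what powers the change of variables in the $f*g$ computation; the right-multiplication side requires no such substitution and is essentially automatic. Everything else reduces to bookkeeping with the two action identities, so the principal risk is a direction error in tracking how the action interacts with inversion and with left versus right multiplication. As a sanity check, note that closure under $*$ together with either one-sided absorption already forces the other, via $(f*g)^* = g^*\!*f^*$, which I could use to shorten the argument.
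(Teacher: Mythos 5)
Your proof is correct, and its overall strategy matches the paper's — direct verification that convolving an equivariant function with an arbitrary element of $\Ccal_c(\Sigma)$ yields an equivariant function, via a change of variables in the fibre integral — but the two arguments power that substitution differently, and the comparison is worth recording. You justify $\tau \mapsto \alpha\cdot\tau$ by proving the $\Gamma$-invariance of each $\mu^u$ directly from the defining formula \eqref{eqn:measuredef} together with the translation invariance of $\lambda$; this is self-contained and is, incidentally, essentially the same computation the paper performs later in Section~\ref{section:c*-algebras} to show the operators $T_\alpha$ are unitary on $\Ltwo(\Sigma_u,\mu_u)$. The paper instead notes that the action of $\alpha^{-1}$ on a fibre is implemented by left multiplication by the isotropy element $\nu := \iota(r(\sigma),\alpha^{-1})$ (so $\alpha^{-1}\cdot\tau = \nu\tau$ with $s(\nu)=r(\nu)=r(\sigma)$) and then invokes the left-invariance of the Haar system $\set{\mu^u}$ already established in Theorem~\ref{thm:haarsystemobtained}; that route re-uses existing machinery and needs no new measure-theoretic lemma, whereas yours is more elementary in that it only uses the construction of $\mu^u$, not its left-invariance. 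Two further small differences favour your write-up: you explicitly check closure under the involution, which the term ``$*$-ideal'' strictly requires and which the paper leaves tacit (a two-sided ideal of a $*$-algebra need not be $*$-closed in general, though here the verification is a one-line computation exactly as you give it), and your observation that $*$-closure plus one-sided absorption yields the other side via $(f*g)^* = g^**f^*$ is a valid substitute for the paper's ``a similar argument'' — indeed, as you note, the $g*f$ side needs no substitution at all. All the auxiliary identities you record, namely $(\alpha\cdot\sigma)^{-1}=\alpha^{-1}\cdot\sigma^{-1}$, $\alpha\cdot(\sigma\tau)=(\alpha\cdot\sigma)\tau=\sigma(\alpha\cdot\tau)$, and $r(\alpha\cdot\sigma)=r(\sigma)$, are correct consequences of Lemma~\ref{lemma:groupaction} and Definition~\ref{def:gamma_twist}(iv).
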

\begin{proof}
It is straightforward to see that $\Ccal_c(\Sigma; G; \chi)$ is a subspace of $\Ccal_c(\Sigma)$. Take $f \in \Ccal_c(\Sigma; G; \chi)$, $g \in \Ccal_c(\Sigma)$, $\alpha \in \Gamma$ and $\sigma \in \Sigma$. Then
\[f*g(\alpha \cdot \sigma) = \int_{\Sigma} f(\tau) g(\tau^{-1}(\alpha \cdot \sigma)) d\mu^{r(\alpha \cdot \sigma)}(\tau) =\int_{\Sigma} f(\tau) g((\alpha^{-1}\cdot\tau)^{-1}\sigma) d\mu^{r(\alpha \cdot \sigma)}(\tau).
\]
Set $\nu:=\iota(r(\sigma),\alpha^{-1})$, and substitute $\eta:=\alpha^{-1}\cdot \tau =\nu\tau$, to obtain
\begin{align*}
        f*g(\alpha \cdot \sigma) &=\int_{\Sigma} f(\tau) g((\alpha^{-1}\cdot\tau)^{-1}\sigma) d\mu^{s(\nu)}(\tau)\\
        &= \int_{\Sigma} f( \alpha \cdot \eta) g(\eta^{-1}\sigma) d\mu^{r(\nu)}(\eta) \\
        &= \int_{\Sigma} \chi(\alpha)f(\eta) g(\eta^{-1}\sigma) d\mu^{r(\sigma)}(\eta) \\
        &= \chi(\alpha)f*g(\sigma).
\end{align*}
Thus $f*g \in \Ccal_c(\Sigma; G; \chi)$, showing that $\Ccal_c(\Sigma; G; \chi)$ is a right-ideal of $\Ccal_c(\Sigma)$ -- a similar argument shows it is also a left-ideal.
\end{proof}

Convolution of functions in $\Ccal_c(\Sigma;G;\chi)$ has the following formula:

\begin{lemma}\label{lem: equivariant convolution}
For every $f,g\in \Ccal_c(\Sigma; G; \chi)$ and every section $\mathfrak{s}$ of $\pi$ we have
\[
    f*g(\sigma) =\int_G f(\sfrak(\gamma)) g(\sfrak(\gamma)^{-1}\sigma) d\lambda^{r(\sigma)}(\gamma)\qquad(\sigma\in\Sigma)\,.
\]
\end{lemma}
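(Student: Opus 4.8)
The plan is to unwind the convolution directly against the induced Haar system and to exploit the equivariance of both $f$ and $g$ to collapse the inner $\Gamma$-integral. First I would write out $f*g(\sigma) = \int_{\Sigma} f(\tau) g(\tau^{-1}\sigma)\, d\mu^{r(\sigma)}(\tau)$ and substitute the defining formula \eqref{eqn:measuredef} for $\mu^{r(\sigma)}$ with integrand $h(\tau) = f(\tau)g(\tau^{-1}\sigma)$. This rewrites the single $\Sigma$-integral as the iterated integral
\[
f*g(\sigma)=\int_{G^{r(\sigma)}}\int_{\Gamma} f(\alpha\cdot\sfrak(\gamma))\, g\bigl((\alpha\cdot\sfrak(\gamma))^{-1}\sigma\bigr)\, d\lambda(\alpha)\, d\lambda^{r(\sigma)}(\gamma),
\]
where $\gamma$ ranges over $G^{r(\sigma)}$ and any fixed section $\sfrak$ may be used by Lemma \ref{lem: for thm:haarsystemobtained}.

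The heart of the argument is to show the inner integrand is constant in $\alpha$. For the first factor, equivariance of $f$ gives $f(\alpha\cdot\sfrak(\gamma)) = \chi(\alpha)f(\sfrak(\gamma))$ at once. For the second factor I would establish the identity
\[
(\alpha\cdot\sfrak(\gamma))^{-1}\sigma = \alpha^{-1}\cdot(\sfrak(\gamma)^{-1}\sigma).
\]
To obtain this, write $\alpha\cdot\sfrak(\gamma) = \sfrak(\gamma)\,\iota(s(\gamma),\alpha)$ using the right-hand form of the action in Lemma \ref{lemma:groupaction}, take inverses (noting $\iota(\cdot,\alpha)^{-1} = \iota(\cdot,\alpha^{-1})$ since $\iota$ is a group homomorphism), and then recognise $\iota(s(\gamma),\alpha^{-1})$ as acting on the range unit of $\sfrak(\gamma)^{-1}\sigma$, because $r(\sfrak(\gamma)^{-1}) = s(\sfrak(\gamma)) = s(\gamma)$ by Proposition \ref{beckys_lemma}. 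Equivariance of $g$ then yields $g\bigl((\alpha\cdot\sfrak(\gamma))^{-1}\sigma\bigr) = \chi(\alpha^{-1})g(\sfrak(\gamma)^{-1}\sigma) = \chi(\alpha)^{-1}g(\sfrak(\gamma)^{-1}\sigma)$.

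Combining the two factors, the characters cancel, $\chi(\alpha)\chi(\alpha)^{-1}=1$, so the inner integrand equals $f(\sfrak(\gamma))g(\sfrak(\gamma)^{-1}\sigma)$, independent of $\alpha$. Integrating over $\Gamma$ against the normalised Haar measure (here $\lambda(\Gamma)=1$, as $\Gamma$ is compact throughout this section) removes the inner integral and leaves exactly $\int_{G^{r(\sigma)}} f(\sfrak(\gamma))g(\sfrak(\gamma)^{-1}\sigma)\,d\lambda^{r(\sigma)}(\gamma)$, which is the claimed formula (the outer integral being over $G$ with the measure $\lambda^{r(\sigma)}$ supported on $G^{r(\sigma)}$). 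I expect the only delicate point to be the bookkeeping of ranges and sources in the displayed inversion identity — in particular checking that the unit appearing in $\iota$ matches the range unit of $\sfrak(\gamma)^{-1}\sigma$, so that the factor genuinely realises the $\Gamma$-action; once that is pinned down the rest is routine. Finally, I would note that the $\alpha$-independence just proved also certifies that the right-hand side does not depend on the choice of $\sfrak$, so no separate well-definedness check is required.
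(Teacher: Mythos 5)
Your proposal is correct and follows essentially the same route as the paper's proof: expand the convolution via the defining formula \eqref{eqn:measuredef} for $\mu^{r(\sigma)}$, rewrite the integrand using $(\alpha\cdot\sfrak(\gamma))^{-1}\sigma=\alpha^{-1}\cdot(\sfrak(\gamma)^{-1}\sigma)$, and let the equivariance of $f$ and $g$ cancel the characters so the normalised Haar measure collapses the inner integral. The only difference is that you spell out the inversion identity and the range/source bookkeeping (via Proposition \ref{beckys_lemma} and centrality) that the paper leaves implicit, which is a welcome addition rather than a deviation.
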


\begin{proof}
We see that 
\begin{align*}
        f*g(\sigma) &=\int_{\Sigma} f(\tau) g(\tau^{-1}\sigma) d\mu^{r(\sigma)}(\tau)\\
        &= \int_G\int_\Gamma f(\alpha \cdot \sfrak(\gamma)) g(\alpha^{-1}\cdot \sfrak(\gamma)^{-1}\sigma) d\lambda(\alpha) d\lambda^{r(\sigma)}(\gamma) 
\end{align*}
and so the desired formula follows from the equivariance of $f$ and $g$ and the normalisation of $\lambda$.
\end{proof}

\begin{example}
    It is worth checking that $\Ccal_c(\Sigma; G; \chi)$ is not the entirety of $\Ccal_c(\Sigma)$. Returning to the example of $\Z_4$ as a $\Z_2$-twist over $\Z_2$ with character $[k]_2 \mapsto e^{k\pi i}$, we have, since $\Z_4$ is discrete here,
    $$ \textnormal{dim}\Ccal_c(\Z_4) = \textnormal{dim}\C^{\Z_4} = |\Z_4| \textnormal{dim}\C = 4, $$
    yet
    \[\Ccal_c(\Z_4; \Z_2; e^{k\pi i}) = \set{f \in \C^{\Z_4} \mid \text{ for all } k \in \Z, f([k+2]_4) = -f([k]_4) }
    \]
    consists of those functions of the form $([0]_4, [1]_4, [2]_4, [3]_4) \mapsto (t, s, -t, -s) $ for parameters $t,s \in \C$, hence its dimension is 2 and the two spaces are distinct. Being finite-dimensional, the spaces are complete and hence we have $\Ccal^*(\Z_4) \neq \Ccal^*(\Z_4; \Z_2; e^{k \pi i})$.
\end{example}

\section{\texorpdfstring{$C^*$}{C*}-Algebras}\label{section:c*-algebras}

Suppose throughout this section that $\Gamma$ is compact and that $\lambda$ is normalised, and let $\chi$ be a character on $\Gamma$. Consider the set $\textnormal{Rep}(\Ccal_c(\Sigma; G; \chi))$ of representations $L: \Ccal_c(\Sigma; G; \chi) \to \Bcal(\mathcal{H})$ which are \emph{$I$-norm bounded}, i.e.
$$\norm{L(f)} \leq \norm{f}_I  \text{ for all } f \in \Ccal_c(\Sigma; G; \chi) $$
where $\mathcal{H}$ is a Hilbert space,
\begin{equation*}
\begin{split}
   \norm{f}_{I, s} &= \sup_{u \in \Sigma^{(0)}} \int_{\Sigma} |f(\sigma)| d\mu_u(\sigma),\\
    \norm{f}_{I, r} &= \sup_{u \in \Sigma^{(0)}} \int_{\Sigma} |f(\sigma)| d\mu^u(\sigma),
\end{split}
\end{equation*}
and
\[ \norm{f}_I = \textnormal{max}\set{ \norm{f}_{I, s}, \norm{f}_{I, r} }.
\]
Recall that for a given $u \in \Sigma^{(0)}$, the Radon measure $\mu_u$ is induced from $\mu^u$ of the Haar system on $\Sigma$ by the relation
\[
    \int_\Sigma f(\sigma) d\mu_u(\sigma)=\int_\Sigma f(\sigma^{-1}) d\mu^u(\sigma)\qquad \text{ for } f\in \Ccal_c(\Sigma).
\]
In particular, the support of $\mu_u$ is $\Sigma_u$.

\begin{definition}
The \textit{full} norm on $\Ccal_c(\Sigma; G; \chi)$ is given by
$$ \norm{f} := \textnormal{sup}\set{ \norm{L(f)}_{\Bcal(\mathcal{H})} \mid L \in \textnormal{Rep}(\Ccal_c(\Sigma; G; \chi)) }. $$
The completion $\Ccal^*(\Sigma; G; \chi)$ of $\Ccal_c(\Sigma; G; \chi)$ in the full norm, we will call the \textit{full }$\Gamma$\textit{-twisted groupoid }$C^*$\textit{-algebra} associated to the $\Gamma$-twist ($\Sigma$, $\iota$, $\pi$) and the character $\chi$.
\end{definition}

For later use, we make the following simple observation:

\begin{lemma}\label{lem: equivariant Lone norm}
For every $f\in \Ccal_c(\Sigma,G;\chi)$, we have
\[
    \int_{\Sigma} \abs{f(\sigma)} d\mu_u(\sigma)=\int_G\abs{f(\sfrak(\gamma))} d\lambda_u(\gamma),
\]
and
\[
\int_{\Sigma} \abs{f(\sigma)} d\mu^u(\sigma)=\int_G\abs{f(\sfrak(\gamma))} d\lambda^u(\gamma),
\]
where $\lambda_u$ is the analogue of $\mu_u$ for $\set{\lambda^u}_{u \in G^{(0)}}$ on $G$.
\end{lemma}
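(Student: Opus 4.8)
The plan is to reduce both identities to the defining formula \eqref{eqn:measuredef} for $\mu^u$ together with two observations: that $\abs{f}$ is constant on each $\pi$-fibre, and that $\lambda$ is normalised. Since $\chi$ takes values in $\T$ we have $\abs{\chi(\alpha)}=1$, so the equivariance of $f$ gives $\abs{f(\alpha\cdot\sigma)}=\abs{f(\sigma)}$ for all $\alpha\in\Gamma$ and $\sigma\in\Sigma$; by Lemma \ref{lemma:uniqueaction} this means $\abs{f}$ is constant on each fibre $\pi^{-1}(\gamma)$. Consequently the function $\gamma\mapsto\abs{f(\sfrak(\gamma))}$ does not depend on the choice of section $\sfrak$, and applying Lemma \ref{lem: for thm:haarsystemobtained} to $\abs{f}\in\Ccal_c(\Sigma)$ identifies it with $\average{\abs{f}}\in\Ccal_c(G)$ (using $\lambda(\Gamma)=1$); in particular the right-hand sides of both asserted identities are well-defined elements to integrate.

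For the range-side identity I would simply apply \eqref{eqn:measuredef} to the function $\abs{f}\in\Ccal_c(\Sigma)$:
\[
\int_{\Sigma}\abs{f(\sigma)}\,d\mu^u(\sigma)=\int_{G^u}\int_{\Gamma}\abs{f(\alpha\cdot\sfrak(\gamma))}\,d\lambda(\alpha)\,d\lambda^u(\gamma)=\int_{G^u}\abs{f(\sfrak(\gamma))}\,d\lambda^u(\gamma),
\]
where the inner integral collapses because $\abs{f(\alpha\cdot\sfrak(\gamma))}=\abs{f(\sfrak(\gamma))}$ and $\lambda(\Gamma)=1$; since $\lambda^u$ is supported on $G^u$ this is the claimed $\int_G\abs{f(\sfrak(\gamma))}\,d\lambda^u(\gamma)$.

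For the source-side identity I would unwind the defining relation $\int_\Sigma h(\sigma)\,d\mu_u(\sigma)=\int_\Sigma h(\sigma^{-1})\,d\mu^u(\sigma)$ with $h=\abs{f}$, then again invoke \eqref{eqn:measuredef}, reducing matters to computing $\abs{f\big((\alpha\cdot\sfrak(\gamma))^{-1}\big)}$. The key algebraic step is the identity $(\alpha\cdot\sigma)^{-1}=\alpha^{-1}\cdot\sigma^{-1}$, which follows from $\alpha\cdot\sigma=\iota(r(\sigma),\alpha)\sigma$, the fact that $\iota(r(\sigma),\alpha)^{-1}=\iota(r(\sigma),\alpha^{-1})$, and $s(\sigma^{-1})=r(\sigma)$ (so that $\sigma^{-1}\iota(s(\sigma^{-1}),\alpha^{-1})=\alpha^{-1}\cdot\sigma^{-1}$). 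Combined with fibre-constancy of $\abs{f}$ this gives $\abs{f((\alpha\cdot\sfrak(\gamma))^{-1})}=\abs{f(\sfrak(\gamma)^{-1})}$, so after integrating out $\Gamma$ (using $\lambda(\Gamma)=1$) one obtains $\int_{G^u}\abs{f(\sfrak(\gamma)^{-1})}\,d\lambda^u(\gamma)$. Finally I would recognise this as $\int_G\abs{f(\sfrak(\gamma))}\,d\lambda_u(\gamma)$ by running the defining relation $\int_G k(\gamma)\,d\lambda_u(\gamma)=\int_G k(\gamma^{-1})\,d\lambda^u(\gamma)$ backwards for $k(\gamma)=\abs{f(\sfrak(\gamma))}$, noting that $\sfrak(\gamma^{-1})$ and $\sfrak(\gamma)^{-1}$ lie in the same fibre $\pi^{-1}(\gamma^{-1})$ so that $\abs{f}$ agrees on them.

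The only genuine subtlety—the main obstacle—is the bookkeeping around the possibly discontinuous section $\sfrak$: one must be sure that $\gamma\mapsto\abs{f(\sfrak(\gamma))}$ is a legitimate element of $\Ccal_c(G)$ and that interchanging $\sfrak(\gamma^{-1})$ with $\sfrak(\gamma)^{-1}$ is harmless. Both points are dispatched by the single observation that $\abs{f}$ is fibre-constant, which renders every section-dependent expression canonical. The remaining manipulations are routine applications of Fubini (valid since $f$ is continuous and compactly supported) and the invariance and normalisation of the Haar measure $\lambda$.
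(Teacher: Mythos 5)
Your proof is correct and follows essentially the same route as the paper: the range-side identity from the equivariance of $f$, the defining formula \eqref{eqn:measuredef}, and $\lambda(\Gamma)=1$; and the source-side identity by inverting, which is exactly what the paper's use of the modified section $\gamma\mapsto\sfrak(\gamma^{-1})^{-1}$ amounts to once you unwind it via $(\alpha\cdot\sigma)^{-1}=\alpha^{-1}\cdot\sigma^{-1}$ and fibre-constancy of $\abs{f}$. Your explicit handling of the possibly discontinuous section (identifying $\gamma\mapsto\abs{f(\sfrak(\gamma))}$ with $\average{\abs{f}}\in\Ccal_c(G)$) is a detail the paper leaves implicit, but it is the same argument.
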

\begin{proof}
The second identity follows from the equivariance of $f$ and the definition of $\mu^u$ (and $\lambda(\Gamma)=1$), while the first follows from the second using the section $\gamma\mapsto \sfrak(\gamma^{-1})^{-1}$ instead of $\sfrak$.
\end{proof}

Alternatively, instead of considering all $I$-norm bounded representations, we can reduce to considering only a specific type of representation as follows. For a given $u \in \Sigma^{(0)}$, the \textit{point mass} measure which assigns a value of 1 to a set containing $u$ and 0 otherwise, gives rise to an $I$-norm bounded representation (for details see \cite[Prop. 1.41]{toolkit}) $\Ind  \delta_u: \Ccal_c(\Sigma) \to \Bcal\left(\Ltwo(\Sigma_u; \mu_u)\right)$, defined by extending continuously the formula
\[
    \Ind \delta_u (f)(g)(\sigma) := f*g(\sigma)=\int_{\Sigma} f(\tau) g(\tau^{-1} \sigma) d\mu^{r(\sigma)}(\tau)
\]
for $f, g \in \Ccal_c(\Sigma)$ and $\sigma \in \Sigma_u$. Notice that the $\Ltwo$-space is with respect to $\mu_u$ instead of $\mu^u$. This restricts to an $I$-norm bounded representation $\Ccal_c(\Sigma; G; \chi) \to \Bcal\left( \Ltwo(\Sigma_u,\mu_u)\right)$ by Proposition \ref{prop: algebra of equivariant Cc}.
However, it will be more convenient for us to be able to shrink the Hilbert space $\Ltwo(\Sigma_u,\mu_u)$ further as follows. For each $\alpha\in\Gamma$ and each $f\in \Ccal_c(\Sigma)$, define $T_\alpha(f)$ to be the function $\sigma\mapsto f(\alpha\cdot \sigma)$ in $\Ccal_c(\Sigma)$. We see that
\begin{align*}
    \int_\Sigma \abs{f(\alpha\cdot \sigma)}^2 d\mu_u(\sigma)&=\int_\Sigma \abs{f(\alpha\cdot \sigma^{-1})}^2 d\mu^u(\sigma)\\
    &=\int_G\int_\Gamma \abs{f(\alpha\cdot (\beta\cdot \sfrak(\gamma))^{-1})}^2 d\lambda(\beta) d\lambda^u(\gamma)\\
    &=\int_G\int_\Gamma \abs{f((\beta\alpha^{-1})^{-1}\cdot \sfrak(\gamma)^{-1})}^2 d\lambda(\beta) d\lambda^u(\gamma)\\
    &=\int_G\int_\Gamma \abs{f(\beta^{-1}\cdot \sfrak(\gamma)^{-1})}^2 d\lambda(\beta) d\lambda^u(\gamma)\\
    &=\int_{\Sigma} \abs{f(\sigma^{-1})}d\mu^u(\sigma) \\
    &=\int_\Sigma \abs{f(\sigma)}^2 d\mu_u(\sigma).
\end{align*}
Thus $T_\alpha$ extends to a unitary operator, also denoted by $T_\alpha$, on $\Ltwo(\Sigma_u,\mu_u)$; the inverse of $T_\alpha$ is $T_{\alpha^{-1}}$. Now, define for each character $\chi$ of $\Gamma$
\[
    \Ltwo(\Sigma_u; G_u; \chi) := \set{g \in \Ltwo(\Sigma_u, \mu_u) \mid\  T_\alpha(g)=\chi(\alpha)g\ (\alpha\in\Gamma)}.
\]

\begin{lemma}
For $u \in \Sigma^{(0)}$ we have
$$\Ind\delta_u(\Ccal_c(\Sigma; G; \chi))(\Ltwo(\Sigma_u,\mu_u))\subseteq \Ltwo(\Sigma_u; G_u; \chi),$$
so that each $\Ind\delta_u(\Ccal_c(\Sigma; G; \chi))$ is zero on $\Ltwo(\Sigma_u; G_u; \chi)^\perp$.
\end{lemma}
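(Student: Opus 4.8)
The plan is to establish the range containment first on the dense subspace $\Ccal_c(\Sigma)$, then extend by continuity, and finally deduce the vanishing on the orthogonal complement from self-adjointness.

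First I would treat $g\in\Ccal_c(\Sigma)$. For such $g$ and any $f\in\Ccal_c(\Sigma;G;\chi)$, the convolution $f*g$ lies again in $\Ccal_c(\Sigma;G;\chi)$; this is precisely the equivariance identity $f*g(\alpha\cdot\sigma)=\chi(\alpha)\,f*g(\sigma)$ verified in the proof of Proposition \ref{prop: algebra of equivariant Cc}. Since the $\Gamma$-action preserves each fibre $\Sigma_u$ (one checks $s(\alpha\cdot\sigma)=s(\sigma)$ using Proposition \ref{beckys_lemma}), restricting $f*g=\Ind\delta_u(f)g$ to $\Sigma_u$ and reading off this identity gives $T_\alpha(\Ind\delta_u(f)g)=\chi(\alpha)\,\Ind\delta_u(f)g$ for every $\alpha\in\Gamma$. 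Hence $\Ind\delta_u(f)g\in\Ltwo(\Sigma_u;G_u;\chi)$ whenever $g\in\Ccal_c(\Sigma)$.

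Next I would pass to an arbitrary $g\in\Ltwo(\Sigma_u,\mu_u)$ by density. The subspace $\Ltwo(\Sigma_u;G_u;\chi)=\bigcap_{\alpha\in\Gamma}\ker(T_\alpha-\chi(\alpha)\,\id)$ is closed, being an intersection of kernels of bounded (indeed unitary) operators. As $\Ccal_c(\Sigma)$ is dense in $\Ltwo(\Sigma_u,\mu_u)$ and $\Ind\delta_u(f)$ is bounded, approximating $g$ by elements of $\Ccal_c(\Sigma)$ and using the previous paragraph places $\Ind\delta_u(f)g$ in the closed subspace $\Ltwo(\Sigma_u;G_u;\chi)$. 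This gives the asserted range containment.

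For the final assertion, write $M:=\Ltwo(\Sigma_u;G_u;\chi)$ and $T:=\Ind\delta_u(f)$. Because $\Ccal_c(\Sigma;G;\chi)$ is a $*$-ideal (Proposition \ref{prop: algebra of equivariant Cc}) and $\Ind\delta_u$ is a $*$-representation, the adjoint $T^*=\Ind\delta_u(f^*)$ again has $f^*\in\Ccal_c(\Sigma;G;\chi)$, so $\operatorname{ran}(T^*)\subseteq M$ by what was just proved. The identity $\ker T=(\operatorname{ran}T^*)^\perp$ then yields $M^\perp\subseteq(\operatorname{ran}T^*)^\perp=\ker T$, i.e. $T$ annihilates $M^\perp$. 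I expect this last step to be the only genuinely non-routine point: the range containment by itself does not force vanishing on the orthogonal complement, and one must invoke self-adjointness of the operator set $\Ind\delta_u(\Ccal_c(\Sigma;G;\chi))$ together with the kernel–range relation; the rest reduces to the equivariance calculation already recorded and a standard density argument.
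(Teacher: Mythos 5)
Your proof is correct and takes essentially the same route as the paper: range containment via equivariance of $f*g$ (Proposition \ref{prop: algebra of equivariant Cc}) on the dense subspace $\Ccal_c(\Sigma)$, then the vanishing on $\Ltwo(\Sigma_u; G_u; \chi)^\perp$ from the fact that $\Ind\delta_u$ is a $*$-representation of the $*$-algebra $\Ccal_c(\Sigma;G;\chi)$. You simply make explicit what the paper leaves compressed, namely the closedness of $\Ltwo(\Sigma_u;G_u;\chi)$ for the density step and the unwinding of the second assertion via $T^*=\Ind\delta_u(f^*)$ together with $\ker T=(\operatorname{ran}T^*)^\perp$.
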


\begin{proof}
The first assertion follows from the density of $\Ccal_c(\Sigma)$ in $\Ltwo(\Sigma_u,\mu_u)$ because, for each $f\in \Ccal_c(\Sigma;G;\chi)$ and $g\in\Ccal_c(\Sigma)$, we have $\Ind\delta_u(f)(g)=f*g$ which is equivariant by Proposition \ref{prop: algebra of equivariant Cc}. The second assertion follows from the first since $\Ind\delta_u$ is a $*$-representation of the $*$-algebra $\Ccal_c(\Sigma;G;\chi)$ on the Hilbert space $\Ltwo(\Sigma_u,\mu_u)$.
\end{proof}

\begin{definition}
Define the \textit{reduced norm} on $\Ccal_c(\Sigma; G; \chi)$ by
\[
    \norm{f}_r := \sup_{u \in \Sigma^{(0)}}  \norm{\Ind \delta_u(f)}_{\Bcal(\Ltwo(\Sigma_u; G_u; \chi))}= \sup_{u \in \Sigma^{(0)}}  \norm{\Ind \delta_u(f)}_{\Bcal(\Ltwo(\Sigma_u,\mu_u))},
\]
and the completion $\Ccal^*_r(\Sigma; G; \chi)$ of $\Ccal_c(\Sigma; G; \chi)$ in this norm, we will refer to as the \textit{reduced }$\Gamma$\textit{-twisted groupoid }$C^*$\textit{-algebra} associated to the $\Gamma$-twist ($\Sigma$, $\iota$, $\pi$) and the character $\chi$.
\end{definition}

\begin{lemma}\label{lem: strong continuity of Talpha}
The map $\alpha\mapsto T_\alpha$ from  $\Gamma$ to $\Bcal(\Ltwo(\Sigma_u,\mu_u))$ is strongly continuous.
\end{lemma}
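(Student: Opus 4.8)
The plan is to prove the claim pointwise: strong continuity of $\alpha\mapsto T_\alpha$ means that for each fixed $g\in\Ltwo(\Sigma_u,\mu_u)$ the orbit map $\alpha\mapsto T_\alpha g$ is norm-continuous on $\Gamma$. Since every $T_\alpha$ is a unitary, hence $\norm{T_\alpha}=1$, and since $\Ccal_c(\Sigma)$ is dense in $\Ltwo(\Sigma_u,\mu_u)$, a routine $3\varepsilon$-argument reduces the problem to the case $g=f\in\Ccal_c(\Sigma)$: given $g$, $\alpha_0\in\Gamma$ and $\varepsilon>0$, pick $f\in\Ccal_c(\Sigma)$ with $\norm{g-f}_2<\varepsilon/3$, and estimate $\norm{T_\alpha g - T_{\alpha_0}g}_2 \le \norm{T_\alpha(g-f)}_2 + \norm{T_\alpha f - T_{\alpha_0}f}_2 + \norm{T_{\alpha_0}(f-g)}_2$, where the outer two terms are each $<\varepsilon/3$ by unitarity and the middle term will be controlled by the case of $\Ccal_c$-functions.

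So first I would fix $f\in\Ccal_c(\Sigma)$ and establish a common compact support for the whole family $\set{T_\alpha f}_{\alpha\in\Gamma}$. Since $T_\alpha f(\sigma)=f(\alpha\cdot\sigma)$ is supported on $\alpha^{-1}\cdot\support(f)$, the set $K:=\Gamma\cdot\support(f)$, being the image of the compact set $\Gamma\times\support(f)$ under the continuous action, is compact and contains $\support(T_\alpha f)$ for every $\alpha$; in particular each $T_\alpha f$ vanishes off $K$. The map $F\colon\Gamma\times K\to\C$, $(\alpha,\sigma)\mapsto f(\alpha\cdot\sigma)$, is continuous on the compact space $\Gamma\times K$, so by the canonical isomorphism $C(\Gamma\times K)\cong C(\Gamma,C(K))$ already invoked in the proof of Lemma \ref{lem: for thm:haarsystemobtained}, the map $\alpha\mapsto T_\alpha f|_{K}$ is continuous from $\Gamma$ into $C(K)$ equipped with the uniform norm.

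Finally I would convert this uniform-norm continuity into $\Ltwo$-continuity. Because each difference $T_\alpha f - T_{\alpha_0}f$ vanishes outside $K$, and $\mu_u$ is a Radon measure so that $\mu_u(K)<\infty$, we obtain $\norm{T_\alpha f - T_{\alpha_0}f}_2^2 \le \mu_u(K)\,\norm{T_\alpha f - T_{\alpha_0}f}_\infty^2$, which tends to $0$ as $\alpha\to\alpha_0$ by the previous paragraph. Combined with the $3\varepsilon$-reduction this yields continuity of $\alpha\mapsto T_\alpha g$ for every $g$, that is, strong continuity. The only real obstacle is the non-compactness of $\Sigma$ (and of $\Sigma_u$): the argument hinges on first trapping all the translates $T_\alpha f$ inside the single compact set $K=\Gamma\cdot\support(f)$, which crucially uses the standing assumption that $\Gamma$ is compact; once this is done, passing from joint continuity to uniform and then to $\Ltwo$ continuity is routine.
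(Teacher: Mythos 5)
Your proof is correct and follows essentially the same route as the paper: reduce by unitarity and density of $\Ccal_c(\Sigma)$ to a fixed $f\in\Ccal_c(\Sigma)$, trap all translates in one compact set $K$, use the identification $\Ccal(\Gamma\times K)\cong \Ccal(\Gamma,\Ccal(K))$ to get continuity into the uniform norm, and convert to $\Ltwo(\Sigma_u,\mu_u)$ via $\mu_u(K)<\infty$. Your $K=\Gamma\cdot\support(f)$ is in fact the same set as the paper's $K=\pi^{-1}(\pi(\support f))$ (by Lemma \ref{lemma:uniqueaction} the fibres of $\pi$ are exactly the $\Gamma$-orbits), with compactness obtained directly from the action rather than via properness of $\pi$.
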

\begin{proof}
Since each $T_\alpha$ is a unitary and $\Ccal_c(\Sigma)$ is dense in $\Ltwo(\Sigma_u;\mu_u)$, it is sufficient to prove that for a given $f\in \Ccal_c(\Sigma)$, the function $\alpha\mapsto T_\alpha(f),\ \Gamma\to \Ltwo(\Sigma_u,\mu_u),$ is continuous. For this, set $K:=\pi^{-1}(\pi(\support f))$ and define $j_K:\Ccal(K)\to \Ltwo(\Sigma_u,\mu_u)$ to be the mapping that extends every function in $\Ccal(K)$ to be zero outside of $K$. Since $K$ is compact, $j_K$ is a well-defined bounded linear operator. Note that the formula for $T_\alpha$ also defines a bounded linear operator on $\Ccal(K)$, denoted by $T'_\alpha$. Then $T_\alpha(f)=j_K(T'_\alpha(f))$. Since $\Ccal(\Gamma\times K)\equiv \Ccal(\Gamma,\Ccal(K))$ canonically and isometrically, the function $\alpha\mapsto T'_\alpha(f),\ \Gamma\to \Ccal(K),$ is continuous. The continuity of $\alpha\mapsto T_\alpha(f),\ \Gamma\to \Ltwo(\Sigma_u,\mu_u),$ then follows. 
\end{proof}

\begin{lemma}\label{lem: Ltwo density of equivariant Cc}
The space $\Ccal_c(\Sigma;G;\chi)$ is dense in $\Ltwo(\Sigma_u;G_u;\chi)$. 
\end{lemma}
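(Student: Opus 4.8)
The plan is to construct an averaging projection onto the $\chi$-isotypic subspace and to show that it carries $\Ccal_c(\Sigma)$, which is already dense in $\Ltwo(\Sigma_u,\mu_u)$, into $\Ccal_c(\Sigma;G;\chi)$; density then follows by continuity.

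First I would define the operator
\[
    \Phi_\chi := \int_\Gamma \overline{\chi(\alpha)}\, T_\alpha \, d\lambda(\alpha)
\]
on $\Ltwo(\Sigma_u,\mu_u)$, interpreting the integral as a vector-valued (weak) integral; this is well-defined because $\alpha\mapsto T_\alpha$ is strongly continuous by Lemma \ref{lem: strong continuity of Talpha}, each $T_\alpha$ is unitary, and $\Gamma$ is compact with $\lambda(\Gamma)=1$. In particular $\Phi_\chi$ is a contraction. Using $T_\beta T_\alpha = T_{\alpha\beta}$, the invariance of $\lambda$, and $\chi(\beta^{-1})=\overline{\chi(\beta)}$, a change of variables gives $T_\beta\Phi_\chi = \chi(\beta)\Phi_\chi$, so the range of $\Phi_\chi$ lies in $\Ltwo(\Sigma_u;G_u;\chi)$; and if $g\in\Ltwo(\Sigma_u;G_u;\chi)$ then $T_\alpha g=\chi(\alpha)g$ forces $\Phi_\chi g = \int_\Gamma |\chi(\alpha)|^2 g\, d\lambda(\alpha)=g$. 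Thus $\Phi_\chi$ is a contractive idempotent that fixes $\Ltwo(\Sigma_u;G_u;\chi)$ pointwise and has range exactly that subspace.

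Next I would check that $\Phi_\chi$ restricts nicely to continuous compactly supported functions: for $f\in\Ccal_c(\Sigma)$ the vector $\Phi_\chi f$ is represented by the concrete function
\[
    \sigma\mapsto \int_\Gamma \overline{\chi(\alpha)}\, f(\alpha\cdot\sigma)\, d\lambda(\alpha)\,,
\]
and this function lies in $\Ccal_c(\Sigma;G;\chi)$. Equivariance is the same change-of-variables computation as above carried out pointwise; continuity follows by the argument already used in the proof of Lemma \ref{lem: for thm:haarsystemobtained} (uniform continuity of $f$ on compact sets together with compactness of $\Gamma$); and $\support(\Phi_\chi f)\subseteq \pi^{-1}\big(\pi(\support f)\big)$, which is compact because $\pi$ is proper by Proposition \ref{pro: pi is proper}. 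Identifying the abstract vector integral with this explicit pointwise integral is routine, since inner products against elements of $\Ccal_c(\Sigma)$ commute with the integral over the compact group $\Gamma$.

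Finally I would conclude by continuity. Given $g\in\Ltwo(\Sigma_u;G_u;\chi)$, choose $f_n\in\Ccal_c(\Sigma)$ with $f_n\to g$ in $\Ltwo(\Sigma_u,\mu_u)$, using the density of $\Ccal_c(\Sigma)$. Then $\Phi_\chi f_n\in\Ccal_c(\Sigma;G;\chi)$ by the previous step, and since $\Phi_\chi$ is bounded and fixes $g$, we obtain $\Phi_\chi f_n\to\Phi_\chi g=g$. Hence $\Ccal_c(\Sigma;G;\chi)$ is dense in $\Ltwo(\Sigma_u;G_u;\chi)$. The main obstacle I expect is the bookkeeping in the third step, namely confirming that the abstract integral defining $\Phi_\chi$ agrees with the explicit pointwise average and that the latter genuinely lands in $\Ccal_c(\Sigma;G;\chi)$ (continuity and, crucially, compact support via properness of $\pi$); once that identification is secured, the density follows formally from the boundedness of $\Phi_\chi$.
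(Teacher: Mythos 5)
Your proposal is correct and follows essentially the same route as the paper: the paper likewise averages the approximants $f_n\in\Ccal_c(\Sigma)$ against $\chi(\alpha)^{-1}T_\alpha$ over the compact group $\Gamma$, verifies via the $C(\Gamma\times K)\cong C(\Gamma,C(K))$ identification (with $K=\pi^{-1}(\pi(\support f))$ compact by properness of $\pi$) that the average lands in $\Ccal_c(\Sigma;G;\chi)$, and passes to the limit using the isometric estimate $\norm{\chi(\alpha)^{-1}T_\alpha(f_n)-h}=\norm{f_n-h}$. Your only departure is cosmetic: you package the averaging as an explicit contractive idempotent $\Phi_\chi$ with range $\Ltwo(\Sigma_u;G_u;\chi)$, whereas the paper performs the same estimate directly on the sequence.
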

\begin{proof}
Take $h\in \Ltwo(\Sigma_u;G_u;\chi)$, and let $f_n\in \Ccal_c(\Sigma)$ such that $f_n\to h$ in $\Ltwo(\Sigma_u,\mu_u)$ as $n \to \infty$. Then, for each $\alpha\in \Gamma$, since
\begin{equation*}
\begin{split}
    \norm{\chi(\alpha)^{-1}T_\alpha(f_n)-h}_{\Ltwo(\Sigma_u,\mu_u)}&=\norm{\chi(\alpha)^{-1}T_\alpha(f_n)-\chi(\alpha)^{-1}T_\alpha(h)}_{\Ltwo(\Sigma_u,\mu_u)}\\
    &=\norm{f_n-h}_{\Ltwo(\Sigma_u,\mu_u)}
\end{split}
\end{equation*}
we see that 
\[
    g_n:=\int_\Gamma \chi(\alpha)^{-1}T_\alpha(f_n) d\lambda(\alpha)\to h\qquad\text{in $\Ltwo(\Sigma_u,\mu_u)$ as $n\to\infty$},
\]
where each integral is convergent in the norm of $\Ltwo(\Sigma_u,\mu_u)$ thanks to the continuity of the integrand. It remains only to prove for each $f\in \Ccal_c(\Sigma)$, that the element $g:=\int_\Gamma \chi(\alpha)^{-1}T_\alpha(f) d\lambda(\alpha)$ of $\Ltwo(\Sigma_u,\mu_u)$ actually belongs to $\Ccal_c(\Sigma;G;\chi)$. Indeed, keeping the same notation as in the proof of Lemma \ref{lem: strong continuity of Talpha}, we see that $g=j_K(g')$ where
\[
    g':=\int_\Gamma \chi(\alpha)^{-1}T'_\alpha(f) d\lambda(\alpha)
\]
and the integral is convergent in the norm of $\Ccal(K)$. Thus $g'$ is the restriction of the function
\[
    \sigma\mapsto \int_\Gamma \chi(\alpha)^{-1}f(\alpha\cdot\sigma) d\lambda(\alpha),\ \Sigma\to\C\,,
\]
which is readily seen to belong to $\Ccal_c(\Sigma;G;\chi)$.
\end{proof}



\bigskip

\section{Comparing generalised twisted groupoids with twisted ones}\label{section:comparison}

In this section, we shall show that $\Gamma$-twisted groupoid $C^*$-algebras can be realised as twisted groupoid $C^*$-algebras. This will be achieved by first demonstrating that, given a character $\chi$ on $\Gamma$, there is a natural morphism from $(\Sigma,\iota,\pi)$ into a $\T$-twist. 
Let $(\Sigma, \iota, \pi)$ be a $\Gamma$-twist over a locally compact Hausdorff groupoid $G$ as before. The first thing to notice is that if $\chi$ is a character on $\Gamma$, then the equivariance condition on $\Ccal_c(\Sigma,G;\chi)$ makes the points in each orbit $(\ker\chi)\cdot \sigma$ indistinguishable, for $\sigma \in \Sigma$. In fact, this idea works more generally:

\begin{lemma}\label{lem: quotient of twisted groupoid}
Let $\Lambda$ be a closed subgroup of $\Gamma$. For $\sigma,\tau\in\Sigma$, say that 
\[
    \sigma \sim \tau \iff \text{ there exists } \alpha \in \Lambda \text{ such that }  \sigma = \alpha \cdot \tau\,.
\]
Denote by $q:\Gamma\to \Gamma/\Lambda\,,\ \alpha\mapsto [\alpha],$ and $q_1:\Sigma\to\Sigma/{\sim}\,,\ \sigma\mapsto [\sigma],$ the quotient maps. Then:
\begin{enumerate}[label = \textnormal{(\roman*)}, left = \parindent]
    \item The relation $\sim$ defined above is an equivalence relation on $\Sigma$. 
    \item\label{it2:equivrel} The set $\Sigma/{\sim}$ has a unique groupoid structure such that the quotient map $q_1:\Sigma\to \Sigma/{\sim}$ a groupoid homomorphism. 
    \item\label{it3:equivrel} There are unique maps $\hat{\iota}: G^{(0)} \times (\Gamma/\Lambda) \to \Sigma$ and $\hat{\pi}: \Sigma/{\sim} \to G$ such that the following diagram commutes:
    \[\begin{tikzcd}
	   {G^{(0)} \times \Gamma} && \Sigma \\
       &&&& G\\
	   {G^{(0)} \times (\Gamma/\Lambda)} && {\Sigma/{\sim}}
	   \arrow["\iota", from=1-1, to=1-3]
	   \arrow["q_0:=\id\times q"', from=1-1, to=3-1]
	   \arrow["\pi", from=1-3, to=2-5]
	   \arrow["{q_1}"', from=1-3, to=3-3]
	   \arrow["{\hat{\iota}}"', from=3-1, to=3-3]
	   \arrow["{\hat{\pi}}"', from=3-3, to=2-5]
    \end{tikzcd}\]
    Moreover, $\hat{\iota}$ is an injective groupoid homomorphism, while $\hat{\pi}$ is a surjective groupoid homomorphism.
    \item\label{it4:equivrel} Equip $\Gamma/\Lambda$ and $\Sigma/{\sim}$ with the quotient topologies induced by these surjective mappings $q:\Gamma\to\Gamma/\Lambda$ and $q_1:\Sigma\to \Sigma/{\sim}$. Then $\Gamma/\Lambda$ is a locally compact Hausdorff abelian group,  $\Sigma/{\sim}$ is a locally compact Hausdorff topological groupoid, and  $(\Sigma/{\sim}, \hat{\iota}, \hat{\pi})$ is a $\Gamma/\Lambda$-twist over $G$.
\end{enumerate}
\end{lemma}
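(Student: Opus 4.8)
The plan is to separate the purely algebraic content of (i)--(iii) from the topological content of (iv). The engine driving everything is the compatibility of the $\Gamma$-action with the groupoid operations, namely the two identities
\[
    (\alpha\cdot\sigma)(\beta\cdot\tau)=(\alpha\beta)\cdot(\sigma\tau)\quad\text{and}\quad (\alpha\cdot\sigma)^{-1}=\alpha^{-1}\cdot\sigma^{-1},
\]
valid whenever $(\sigma,\tau)\in\Sigma^{(2)}$, both of which follow quickly from Lemma \ref{lemma:groupaction}, the centrality axiom Definition \ref{def:gamma_twist}(iv), and the fact that $r(\alpha\cdot\sigma)=r(\sigma)$ and $s(\alpha\cdot\sigma)=s(\sigma)$ (a consequence of Proposition \ref{beckys_lemma}).

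For (i), reflexivity, symmetry and transitivity follow from $\Lambda$ being a subgroup together with the group-action axioms of Lemma \ref{lemma:groupaction}. For \ref{it2:equivrel} I would define $[\sigma][\tau]:=[\sigma\tau]$ and $[\sigma]^{-1}:=[\sigma^{-1}]$; the two displayed identities show these are independent of representatives and that the action preserves composability (since $s,r$ are $\Lambda$-invariant), so $\Sigma/{\sim}$ becomes a groupoid with unit space $q_1(G^{(0)})$. Uniqueness is automatic: as $q_1$ is surjective, requiring it to be a homomorphism forces exactly these formulas. For \ref{it3:equivrel}, set $\hat\pi([\sigma]):=\pi(\sigma)$ and $\hat\iota(u,[\alpha]):=[\iota(u,\alpha)]$; well-definedness of $\hat\pi$ uses $\pi(\alpha\cdot\sigma)=\pi(\sigma)$, and well-definedness and injectivity of $\hat\iota$ rest on the identity $\delta\cdot\iota(u,\beta)=\iota(u,\delta\beta)$ (so $\iota(u,\alpha)\sim\iota(v,\beta)$ forces $u=v$ and $[\alpha]=[\beta]$) together with injectivity of $\iota$. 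Commutativity of the diagram and uniqueness of both maps are then immediate from surjectivity of the quotient maps, and the homomorphism and surjectivity claims descend from those of $\pi$ and $\iota$.

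The substantive work is \ref{it4:equivrel}. That $\Gamma/\Lambda$ is again a locally compact Hausdorff abelian group is the standard fact that the quotient of a locally compact Hausdorff group by a closed subgroup is such, which I would simply cite. The heart of the matter is the topology of $\Sigma/{\sim}$. First I would record that $q$ and $q_1$ are open maps, because their saturations $\bigcup_{\delta\in\Lambda}\delta\cdot V$ are open (each $\sigma\mapsto\delta\cdot\sigma$ being a homeomorphism). Then, for each $\gamma$, set $\hat P_\gamma:=q_1\circ P_\gamma$, a continuous section of $\hat\pi$ over $U_\gamma$, and study the commuting square relating $\varphi_{P_\gamma}\colon U_\gamma\times\Gamma\to\pi^{-1}(U_\gamma)$ to the induced map $\varphi_{\hat P_\gamma}\colon U_\gamma\times(\Gamma/\Lambda)\to\hat\pi^{-1}(U_\gamma)$, $(\eta,[\alpha])\mapsto[\alpha\cdot P_\gamma(\eta)]$, via the vertical quotient maps $\id\times q$ and $q_1|_{\pi^{-1}(U_\gamma)}$. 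Since $\varphi_{P_\gamma}$ is a homeomorphism (Definition \ref{def:gamma_twist}(iii), Lemma \ref{lem: continuous local section}), both verticals are open quotient maps, and the relation $\sim$ on $\pi^{-1}(U_\gamma)$ corresponds under $\varphi_{P_\gamma}$ exactly to translating the $\Gamma$-coordinate by $\Lambda$, a diagram chase identifies $\varphi_{\hat P_\gamma}$ as a homeomorphism onto the open set $\hat\pi^{-1}(U_\gamma)$. This simultaneously verifies local triviality for the quotient twist and exhibits an open cover of $\Sigma/{\sim}$ by charts homeomorphic to $U_\gamma\times(\Gamma/\Lambda)$, giving local compactness. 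Global Hausdorffness then follows by separating two distinct classes either through $\hat\pi$ into the Hausdorff space $G$ (when their $\pi$-images differ) or inside a single chart over $U_\eta$ (when they share the image $\eta$). Continuity of the groupoid operations on $\Sigma/{\sim}$ I would deduce from continuity of those on $\Sigma$ using that $q_1$ and $(q_1\times q_1)|_{\Sigma^{(2)}}$ are quotient maps, and continuity of $\hat\pi,\hat\iota$ follows likewise since $q_1$ and $\id\times q$ are quotient maps. Finally, exactness $\hat\iota(G^{(0)}\times(\Gamma/\Lambda))=\hat\pi^{-1}(G^{(0)})$ and centrality of the image of $\hat\iota$ descend directly from the corresponding statements for $(\Sigma,\iota,\pi)$.

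I expect the main obstacle to be the topological bookkeeping in \ref{it4:equivrel}: verifying that $\varphi_{\hat P_\gamma}$ is a genuine homeomorphism (not merely a continuous bijection) through the quotient-map diagram, and carefully assembling the local charts into global Hausdorffness and continuity of multiplication and inversion. The algebraic descent in (i)--\ref{it3:equivrel} is routine once the two compatibility identities are in hand.
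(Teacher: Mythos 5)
Your proposal is correct and takes essentially the same route as the paper's proof: the same representative-wise definitions with uniqueness via surjectivity of the quotient maps in (i)--(iii), and in (iv) the same key steps, namely openness of $q$ and $q_1$ (via saturation of open sets under the $\Lambda$-action), the induced section $\hat{P}_\gamma = q_1\circ P_\gamma$ with the quotient-map square identifying $\hat{\varphi}_{\hat{P}_\gamma}$ as a homeomorphism, Hausdorffness assembled chart-by-chart together with Hausdorffness of $G$, and descent of continuity of the operations using that $(q_1\times q_1)^{-1}\bigl((\Sigma/{\sim})^{(2)}\bigr)=\Sigma^{(2)}$ makes the restriction of $q_1\times q_1$ an open quotient map. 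No gaps to report.
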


\begin{proof}
It is straightforward to check that $\sim$ is an equivalence relation on $\Sigma$. For item~\ref{it2:equivrel}, the only way to define a product on $\Sigma/{\sim}$ such that $q_1$ is a groupoid homomorphism is to set 
\[
    [\sigma][\tau]:=[\sigma\tau]\quad\text{whenever}\quad (\sigma,\tau)\in \Sigma^{(2)}.
\]
This operation is well-defined because if $\sigma,\tau$ are as above and we have $\sigma'=\alpha\cdot\sigma$, $\tau'=\beta\cdot\tau$ for some $\alpha,\beta\in \Lambda$,  then $(\sigma',\tau')\in\Sigma^{(2)}$ and $\sigma'\tau'=\alpha\beta\cdot (\sigma\tau)$. With this multiplication, it is again straightforward to check that $\Sigma/{\sim}$ is a groupoid and $q_1:\Sigma\to\Sigma/{\sim}$ is a groupoid homomorphism. 

Item~\ref{it3:equivrel} is also straightforward: the well-definedness and the injectivity of $\hat{\iota}$ follow from the definition of $\Sigma/{\sim}$ and the injectivity of $\iota$. The well-definedness and the surjectivity of $\hat{\pi}$ follow from the definition of $\Sigma/{\sim}$ and the surjectivity of $\pi$. The uniqueness of $\hat{\iota}$ and of $\hat{\pi}$ follows from the surjectivity of $\id \times q$ and $q_1$, respectively; while their homomorphism property follows also from the  surjectivity of $\id \times q$ and $q_1$ and from the homomorphism property of $\id \times q,\iota,q_1,\pi$.

Now we show item~\ref{it4:equivrel}. That $\Gamma/\Lambda$, equipped with the quotient topology, is a locally compact Hausdorff abelian group is standard. In fact, the continuous map $q:\Gamma\to \Gamma/\Lambda$ is open, and therefore so too is $\id \times q:G^{(0)}\times \Gamma\to G^{(0)}\times\Gamma/\Lambda$.  Thus, the product topology on $G^{(0)}\times\Gamma/\Lambda$ is also the quotient topology induced by the surjective map $\id \times q$, and $\hat{\iota}: G^{(0)} \times (\Gamma/\Lambda) \to \Sigma/{\sim}$ is continuous.

By the definition of the quotient topology on $\Sigma/{\sim}$, the map $\hat{\pi}:\Sigma/{\sim}\to G$ is also continuous. Further, the surjective map $q_1:\Sigma\to\Sigma/{\sim}$ is also open because  if $U$ is open in $\Sigma$ then
\[
    q_1^{-1}(q_1(U))=\set{\alpha\cdot \sigma\colon\ \alpha\in \Lambda,\ \sigma\in U}
\]
which is open in $\Sigma$ by Lemma \ref{lemma:groupaction}. In particular, this implies that $\Sigma/{\sim}$ is locally compact, and together with the openness of $\pi$ it follows that $\hat{\pi}$ is open. Also, we have $q_1^{-1}(\textnormal{im }\iota) = \textnormal{im }\hat{\iota}$, and so $q_1 |_{\textnormal{im }\iota}: \textnormal{im }\iota \to \textnormal{im }\hat{\iota}$ is an open mapping; the commutative diagram then shows that $\hat{\iota}$ is a topological embedding.

Since $\Lambda$ is compact and $\Sigma$ is Hausdorff, it follows e.g. by \cite[\S 31 Ex. 8]{munkres} that $\Sigma/{\sim}$ is also Hausdorff.

Finally we show that $\Sigma/{\sim}$ is a topological groupoid, that is, the operations are continuous. Consider the following commutative diagram:
\[\begin{tikzcd}
	   \Sigma\times \Sigma && \Sigma^{(2)} &&\Sigma \\
       &&&&&& \\
	   (\Sigma/{\sim}) \times (\Sigma/{\sim}) &&  (\Sigma/{\sim})^{(2)}&&\Sigma/{\sim}.
	   \arrow["\text{product}", from=1-3, to=1-5]
        \arrow["\text{inclusion}", from=1-3, to=1-1]
	   \arrow["q_1\times q_1"', from=1-1, to=3-1]
	   \arrow[from=1-3, to=3-3]
       \arrow["{q_1}", from=1-5, to=3-5]
	   \arrow["\text{product}"', from=3-3, to=3-5]
       \arrow["\text{inclusion}", from=3-3, to=3-1]
    \end{tikzcd}
\]
Since $q_1:\Sigma\to\Sigma/{\sim}$ is a surjective continuous open map, so too is $q_1\times q_1:\Sigma\times \Sigma\to (\Sigma/{\sim}) \times (\Sigma/{\sim})$. We see from the definition of the product on $\Sigma/{\sim}$ that
\[
    (q_1\times q_1)^{-1}\left((\Sigma/{\sim})^{(2)}\right)=\Sigma^{(2)}
\]
and so the restriction of $q_1\times q_1$ to $\Sigma^{(2)}\to (\Sigma/{\sim})^{(2)}$ is also a surjective continuous open map. In particular, the subspace topology on $(\Sigma/{\sim})^{(2)}$ is the quotient topology induced from the subspace topology on $\Sigma^{(2)}$ by the restriction of  $q_1\times q_1$. In this way, we obtain the continuity of the multiplication $(\Sigma/{\sim})^{(2)}\to \Sigma/{\sim}$ from that of $\Sigma^{(2)}\to \Sigma$. The continuity of the inversion operation on $\Sigma/{\sim}$ follows by similar reasoning. The final assertion that $(\Sigma/{\sim}, \hat{\iota}, \hat{\pi})$ is a $\Gamma/\Lambda$-twist over $G$ now follows.
\end{proof}

Consider now the general situation:

\begin{theorem}\label{thm: compatible twists construction}
Let $(\Sigma,\iota, \pi)$ be a $\Gamma$-twist over $G$ and let $\theta:\Gamma\to\Gamma'$ be a continuous group homomorphism from $\Gamma$ into another locally compact Hausdorff abelian group $\Gamma'$. Then there exists a $\Gamma'$-twist $(\Sigma',\iota',\pi')$ and a continuous groupoid homomorphism $\theta_1:\Sigma\to \Sigma'$ such that the following diagram commutes:
\begin{equation}\label{diagram: twist extension}
\begin{tikzcd}
	{G^{(0)} \times \Gamma} && \Sigma \\
	&&&& G. \\
	{G^{(0)} \times \Gamma'} && {\Sigma'}
	\arrow["\iota", hook, from=1-1, to=1-3]
	\arrow["\id\times\theta"',  from=1-1, to=3-1]
	\arrow["\pi", two heads, from=1-3, to=2-5]
	\arrow["\theta_1", from=1-3, to=3-3]
	\arrow["{\iota'}", hook, from=3-1, to=3-3]
	\arrow["{\pi'}", two heads, from=3-3, to=2-5]
\end{tikzcd}
\end{equation}
Moreover, if $\theta$ is a topological embedding of $\Gamma$ as a closed subgroup of $\Gamma'$, then $\theta_1$ is a topological embedding of $\Sigma$ as a closed subgroupoid of $\Sigma'$.
\end{theorem}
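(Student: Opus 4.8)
The plan is to realise $\Sigma'$ as a quotient of a single larger twist, so that Lemma~\ref{lem: quotient of twisted groupoid} does essentially all of the topological work. First I would observe that the product $\Sigma\times\Gamma'$ is a $(\Gamma\times\Gamma')$-twist over $G$: it is a locally compact Hausdorff groupoid (a product of the groupoid $\Sigma$ with the group $\Gamma'$), the maps $\iota\times\id_{\Gamma'}\colon G^{(0)}\times\Gamma\times\Gamma'\to\Sigma\times\Gamma'$, $(u,\alpha,\beta')\mapsto(\iota(u,\alpha),\beta')$ and $\pi\circ\mathrm{pr}_1\colon\Sigma\times\Gamma'\to G$, $(\sigma,\beta')\mapsto\pi(\sigma)$ give the required exact sequence, and the maps $P_\gamma^{\times}:=(P_\gamma,\text{const}_{e'})$ furnish local trivialisations of $\Sigma\times\Gamma'$. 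All four axioms of Definition~\ref{def:gamma_twist} for $\Sigma\times\Gamma'$ follow immediately from the corresponding axioms for $\Sigma$ together with the group structure of $\Gamma'$; this is the only genuinely new verification, and it is routine.

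The key idea is then to quotient by the right subgroup. Let $\Lambda_\theta:=\set{(\alpha,\theta(\alpha)^{-1}):\alpha\in\Gamma}$, the graph of $\alpha\mapsto\theta(\alpha)^{-1}$. Since $\theta$ is continuous and $\Gamma'$ is Hausdorff, $\Lambda_\theta$ is a closed subgroup of $\Gamma\times\Gamma'$. Applying Lemma~\ref{lem: quotient of twisted groupoid} to the $(\Gamma\times\Gamma')$-twist $\Sigma\times\Gamma'$ with $\Lambda=\Lambda_\theta$ yields, with no further effort, that $\Sigma':=(\Sigma\times\Gamma')/{\sim}$ is a locally compact Hausdorff groupoid and a $\big((\Gamma\times\Gamma')/\Lambda_\theta\big)$-twist over $G$. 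It remains to identify the fibre group: the continuous surjective homomorphism $(\alpha,\beta')\mapsto\theta(\alpha)\beta'$ has kernel exactly $\Lambda_\theta$, and it is open because $\theta(A)B'=\bigcup_{a\in A}\theta(a)B'$ is open for basic opens $A\times B'$; hence it descends to a topological isomorphism $(\Gamma\times\Gamma')/\Lambda_\theta\cong\Gamma'$. Transporting the structure maps $\hat\iota,\hat\pi$ of Lemma~\ref{lem: quotient of twisted groupoid} across this isomorphism gives $\iota'(u,\beta')=[(u,\beta')]$ and $\pi'([(\sigma,\beta')])=\pi(\sigma)$, exhibiting $(\Sigma',\iota',\pi')$ as a $\Gamma'$-twist over $G$. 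I would define the comparison map canonically by $\theta_1(\sigma):=[(\sigma,e')]$, i.e. $\theta_1=q_1\circ(\cdot,e')$; it is a groupoid homomorphism because $q_1$ is, and a short check that $(\iota(u,\alpha),e')\sim(u,\theta(\alpha))$ and that $\hat\pi\circ q_1=\pi\circ\mathrm{pr}_1$ verifies the two commuting triangles in \eqref{diagram: twist extension}.

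For the uniqueness of $\theta_1$ I would appeal to the universal (pushout) property carried by the quotient construction: any groupoid homomorphism making \eqref{diagram: twist extension} commute is forced on $\iota(G^{(0)}\times\Gamma)$, and $q_1\circ(\cdot,e')$ is the structure map singled out by this property. The main obstacle I anticipate lies not in any single calculation but precisely here, since a priori such a homomorphism could be modified by a $\Gamma'$-valued cocycle factoring through $\pi$; I would therefore phrase uniqueness relative to the canonical pushout structure rather than as an absolute statement, and verify that the canonical map is the intended one. The openness of $(\alpha,\beta')\mapsto\theta(\alpha)\beta'$ used above is the other point that must not be skipped, as a continuous bijective homomorphism of locally compact groups need not be a homeomorphism without it.

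Finally, for the ``moreover'' clause, suppose $\theta$ embeds $\Gamma$ as a closed subgroup of $\Gamma'$. Injectivity of $\theta_1$ follows from $\ker\theta=\set{e}$: if $[(\sigma,e')]=[(\tau,e')]$ then $\tau=\alpha\cdot\sigma$ with $\theta(\alpha)=e'$, forcing $\alpha=e$. To see that $\theta_1$ is a topological embedding with closed image I would work in a local trivialisation: under the homeomorphisms $\pi^{-1}(U_\gamma)\cong U_\gamma\times\Gamma$ and $\pi'^{-1}(U_\gamma)\cong U_\gamma\times\Gamma'$ (the latter arising as in \eqref{equation}), the map $\theta_1$ becomes $(\eta,\alpha)\mapsto(\eta,\theta(\alpha))$, which is a closed embedding precisely because $\theta(\Gamma)$ is closed in $\Gamma'$. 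Since $\theta_1(\Sigma)\cap\pi'^{-1}(U_\gamma)=\theta_1(\pi^{-1}(U_\gamma))$ and the sets $\pi'^{-1}(U_\gamma)$ form an open cover of $\Sigma'$, being closed in each member of the cover shows $\theta_1(\Sigma)$ is closed, and being a homeomorphism onto its image locally shows it is so globally; that the image is a subgroupoid is automatic. I expect this last part to be entirely routine once the local triviality supplied by Lemma~\ref{lem: quotient of twisted groupoid} is in hand.
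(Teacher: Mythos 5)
Your existence construction is, detail for detail, the paper's own proof. The paper also forms the product twist (there written $\Gamma'\times\Sigma$, made into a $(\Gamma'\times\Gamma)$-twist with $\iota^\times(u,z,\alpha)=(z,\iota(u,\alpha))$ and local sections $\eta\mapsto(e',P_\gamma(\eta))$) and then quotients via Lemma \ref{lem: quotient of twisted groupoid}; your subgroup $\Lambda_\theta=\set{(\alpha,\theta(\alpha)^{-1}):\alpha\in\Gamma}$ is precisely the paper's $\Lambda=\ker\bigl(q:(z,\alpha)\mapsto z\theta(\alpha)\bigr)$ after swapping the factors, your openness argument for $(\alpha,\beta')\mapsto\theta(\alpha)\beta'$ matches the paper's observation that $q$ is open (so that $(\Gamma\times\Gamma')/\Lambda_\theta\cong\Gamma'$ topologically), and your $\theta_1(\sigma)=[(\sigma,e')]$ is the paper's $\theta_1(\sigma)=[e',\sigma]$. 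Your treatment of the ``moreover'' clause is likewise the paper's (sketched) argument: in the trivialisations of Lemma \ref{lem: continuous local section} the map $\theta_1$ becomes $(\eta,\alpha)\mapsto(\eta,\theta(\alpha))$, and your patching of closedness over the open cover $\set{\pi'^{-1}(U_\gamma)}$ fills in what the paper leaves implicit. On existence and on the embedding statement you are correct and essentially reproduce the published proof.

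The divergence is uniqueness, and here your suspicion is vindicated but not discharged. The obstruction you anticipate is real, not merely ``a priori'': if $c:G\to\Gamma'$ is any nontrivial continuous groupoid homomorphism, then $\theta_1'(\sigma):=c(\pi(\sigma))\cdot\theta_1(\sigma)$ is again a continuous groupoid homomorphism (by centrality of the image of $\iota'$), satisfies $\pi'\circ\theta_1'=\pi'\circ\theta_1=\pi$, and agrees with $\theta_1$ on $\iota(G^{(0)}\times\Gamma)$ since cocycles vanish on units; taking $\Gamma$ trivial, $\Sigma=G=\Z$, $\Gamma'=\T$ and $c$ a nontrivial character of $\Z$ produces two distinct maps making \eqref{diagram: twist extension} commute. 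So absolute uniqueness fails, and the paper's own uniqueness argument --- which concludes ``$\pi(\sigma)\neq\pi(\sigma)$'' from the existence of two such maps --- is a non sequitur: the intertwining relation $\theta_1(\alpha\cdot\sigma)=\theta(\alpha)\cdot\theta_1(\sigma)$ that it establishes (the $\chi$ appearing there is a typo for $\theta$) only determines $\theta_1$ fibrewise from a single value, and says nothing about how those values vary across fibres. However, as written your proposal does not prove uniqueness either: the ``universal pushout property'' you invoke is never stated or verified, and Lemma \ref{lem: quotient of twisted groupoid} alone does not supply it. To complete your argument you must either record the counterexample and restate the theorem with uniqueness relative to additional data (e.g.\ characterising $(\Sigma',\theta_1)$ as the pushout of $\iota$ along $\id\times\theta$ in a suitable category of twists, as in \cite{pushouts}), or actually prove the universal property you appeal to. In short: existence and the embedding clause are correct and match the paper; the uniqueness clause is a genuine gap in your write-up --- one that it shares with, and correctly diagnoses in, the paper itself.
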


To prove this theorem, notice that the most important aspect of $\Gamma$ is its action on $\Sigma$, so to begin we define $\Sigma'$ by way of extending the action of $\Gamma$ on $\Sigma$ to an action defined of all of $\Gamma'$ on $\Sigma$ in a formal way. In the special case when $\theta$ is an embedding of $\Gamma$ as a closed subgroup of $\Gamma'$, we envision $\Sigma$ as subgroupoid of the newly constructed $\Sigma'$ whose $\Gamma'$-action is an extension of the $\Gamma$-action on $\Sigma$, and so we need a rule telling when $z\cdot\sigma=w\cdot \tau$ for $w, z \in \Gamma'$ and $\sigma,\tau \in \Sigma$.  To this end, define a relation $\sim$ on $\Gamma' \times \Sigma$ such that 
\[
    (z, \sigma) \sim (w, \tau) \textnormal{ if and only if } \frac{w}{z}=\theta(c)\textnormal{ and } \sigma = c \cdot \tau\ \text{for some $c\in\Gamma$}.
\]
Then $\sim$ is an equivalence relation.
Set $\Sigma' := (\Gamma' \times \Sigma)/{\sim}$ equipped with the quotient topology. We define the multiplication and inverse operations coordinate-wise:
\begin{align*}
    [z, \sigma][w, \tau] := [zw, \sigma \tau]\quad \textnormal { and} \quad
    [z, \sigma]^{-1} := \left[z^{-1}, \sigma^{-1}\right]\textnormal{,}
\end{align*}
where $z, w \in \Gamma'$ and $ \sigma, \tau \in \Sigma$. Towards our aim of constructing a twist, we will also define
\begin{align*} 
\iota'&: G^{(0)} \times \Gamma' \to \Sigma',\ (u, z) \mapsto [z,\iota(u,e)]\equiv [z, u] \text{ and }\\
\pi'&: \Sigma' \to G,\ [z, \sigma] \mapsto \pi(\sigma).
\end{align*}
Denote by $e$ and $e'$ the units of $\Gamma$ and $\Gamma'$, respectively. 
The well-definedness and required properties of the above construction are proved in the following lemma.

\begin{lemma}
    The space $\Sigma'$ with operations defined above is a locally compact Hausdorff topological groupoid, and $(\Sigma^{\prime}, \iota^{\prime}, \pi^{\prime})$ is a twist over $G$.
\end{lemma}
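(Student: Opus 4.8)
The plan is to verify the twist axioms for $(\Sigma', \iota', \pi')$ in the order that the construction naturally suggests, treating the groupoid structure first and the topology second. First I would confirm that $\sim$ is an equivalence relation and that the multiplication $[z,\sigma][w,\tau] := [zw,\sigma\tau]$ and inversion $[z,\sigma]^{-1} := [z^{-1},\sigma^{-1}]$ are well-defined on equivalence classes; this amounts to checking that if $(z,\sigma)\sim(z',\sigma')$ and $(w,\tau)\sim(w',\tau')$ with $(\sigma,\tau)\in\Sigma^{(2)}$, then $(\sigma',\tau')\in\Sigma^{(2)}$ and the products are related by $\sim$, using that $\theta$ is a homomorphism and that the $\Gamma$-action is central (Definition \ref{def:gamma_twist}(iv)), so that the witnesses $c,d\in\Gamma$ combine to give witness $cd$. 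Here one must be careful that composability of $[z,\sigma]$ with $[w,\tau]$ in $\Sigma'$ should be read as $s(\pi(\sigma))=r(\pi(\tau))$; I would define the unit space of $\Sigma'$ to be the classes $[e',u]$ for $u\in G^{(0)}$ and check the groupoid axioms (associativity, units, inverses) directly from those of $\Sigma$ and $\Gamma'$.

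Next I would verify that $\iota'$ and $\pi'$ are well-defined continuous groupoid homomorphisms fitting into the exact sequence. Well-definedness of $\pi'$ is immediate since $\sim$ only alters the $\Gamma'$-coordinate and the $\Sigma$-coordinate within a single $\pi$-fibre, so $\pi(\sigma)$ is constant on classes. For $\iota'$ one checks injectivity using injectivity of $\iota$ together with the defining relation, and exactness $\iota'(G^{(0)}\times\Gamma')=(\pi')^{-1}(G^{(0)})$ follows because $\pi'([z,\sigma])\in G^{(0)}$ forces $\pi(\sigma)\in G^{(0)}$, hence $\sigma=\iota(u,\alpha)$ for some $u,\alpha$, which can be absorbed into the $\Gamma'$-coordinate via $\theta$. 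Centrality of the image of $\iota'$ reduces to centrality of $\iota$ in $\Sigma$ plus commutativity of $\Gamma'$.

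The topological content is where I expect the main obstacle, and I would handle local triviality and the Hausdorff property in close analogy with the proof of Lemma \ref{lem: quotient of twisted groupoid}\ref{it4:equivrel}. The key observation is that the quotient map $\Gamma'\times\Sigma\to\Sigma'$ should be shown open, so that local sections of $\pi$ push forward to local sections of $\pi'$: given a local trivialisation $(U_\gamma,P_\gamma,\varphi_{P_\gamma})$ for $\Sigma$, I would define $P'_\gamma(\eta):=[e',P_\gamma(\eta)]$ and build the candidate trivialising map $U_\gamma\times\Gamma'\to(\pi')^{-1}(U_\gamma)$, $(\eta,z)\mapsto z\cdot P'_\gamma(\eta):=[z,P_\gamma(\eta)]$, then argue it is a homeomorphism by comparing quotient topologies exactly as in the diagram-chase in that lemma. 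Openness of the quotient map follows from the openness of the $\Gamma$-action on $\Sigma$ (Lemma \ref{lemma:groupaction}) together with translation by $\Gamma'$, and Hausdorffness of $\Sigma'$ is then deduced locally on the Hausdorff charts $(\pi')^{-1}(U_\gamma)$ and globally by separating distinct fibres using the Hausdorff property of $G$. The genuinely delicate point is verifying that $\sim$-classes are closed (equivalently that the graph of $\sim$ is closed in $(\Gamma'\times\Sigma)^2$), which is what ultimately guarantees the quotient is Hausdorff; for this I would use that $\theta(\Gamma)$ is closed in $\Gamma'$ in the embedding case, and in the general case lean on the local chart structure rather than attempting a direct global argument.
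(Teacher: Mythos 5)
Your proposal is correct in substance, but it takes a more laborious route than the paper: you re-verify every twist axiom for $\Sigma'$ by hand, in effect replaying the proof of Lemma \ref{lem: quotient of twisted groupoid}(iv) in this special case, whereas the paper reduces the present lemma to that result outright. The paper first observes that $\Gamma'\times\Sigma$, with coordinatewise operations and the maps $\iota^\times(u,z,\alpha)=(z,\iota(u,\alpha))$ and $\pi^\times(z,\sigma)=\pi(\sigma)$, is a $(\Gamma'\times\Gamma)$-twist over $G$ (the local trivialisations are just the product ones), and then notes that your relation $\sim$ is exactly the quotient relation of Lemma \ref{lem: quotient of twisted groupoid} for the closed subgroup $\Lambda=\ker q$, where $q(z,\alpha)=z\theta(\alpha)$; since $q$ is a continuous open surjection, $(\Gamma'\times\Gamma)/\Lambda\cong\Gamma'$ as topological groups, and the quotient lemma then yields in one stroke everything you propose to check piecemeal: well-definedness of the operations, openness of the quotient map, the pushed-forward sections and chart homeomorphisms, Hausdorffness, and continuity of multiplication and inversion. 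Your direct decomposition does work, but the reduction buys brevity and, more importantly, dissolves the one point you flag as delicate.

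On that point, your worry is a red herring: closedness of $\theta(\Gamma)$ in $\Gamma'$ is never needed, in the embedding case or otherwise. The subgroup implicitly being quotiented by is $\Lambda=\set{(\theta(\alpha)^{-1},\alpha)\colon \alpha\in\Gamma}$, the graph of a continuous map into a Hausdorff group, hence closed in $\Gamma'\times\Gamma$ automatically even when $\theta(\Gamma)$ is dense and non-closed (as happens, e.g., for $\theta:\Z\to\T$, $n\mapsto e^{2\pi i n\vartheta}$ with $\vartheta$ irrational, which the theorem permits since $\Gamma$ need not be compact there). Equivalently, and more in the spirit of your plan: once your chart map $(\eta,z)\mapsto[z,P_\gamma(\eta)]$ is shown to be a homeomorphism of $U_\gamma\times\Gamma'$ onto $(\pi')^{-1}(U_\gamma)$ --- injectivity uses only freeness of the $\Gamma$-action from Lemma \ref{lemma:groupaction}, not injectivity or properness of $\theta$ --- Hausdorffness of $\Sigma'$ on each chart is immediate because $\Gamma'$ is Hausdorff by hypothesis, and no separate argument about closed classes or a closed graph of $\sim$ is required. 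One genuine omission to repair: you never explicitly verify that multiplication and inversion on $\Sigma'$ are continuous, which is part of the assertion that $\Sigma'$ is a topological groupoid; in the direct route this needs the observation that $q_1\times q_1$ restricts to a continuous open surjection $(\Gamma'\times\Sigma)^{(2)}\to(\Sigma')^{(2)}$, exactly as at the end of the paper's proof of Lemma \ref{lem: quotient of twisted groupoid}, and it is a required step rather than a consequence of the chart computation.
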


\begin{proof}
It is straightforward to check that $\Gamma'\times \Sigma$, with coordinate-wise operations and the  product topology, is a locally compact Hausdorff groupoid with 
\begin{align*}
    s(z, \sigma) = (e', s(\sigma))\quad\textnormal{and} \quad r(z, \sigma) = (e', r(\sigma)),
\end{align*}
so $(\Gamma'\times\Sigma)^{(0)}$ is homeomorphic to $G^{(0)}$ and $\Sigma^{(0)}$ so that we can once again identify units. Moreover, together with the morphisms
\begin{align*}
    G^{(0)}\times (\Gamma'\times\Gamma)\xhookrightarrow[\iota^\times]{(u,z,\alpha)\mapsto (z,\iota(u,\alpha))} \Gamma'\times \Sigma \xrightarrowdbl[\pi^\times]{(z,\sigma)\mapsto \pi(\sigma)} G,
\end{align*}
$\Gamma'\times \Sigma$ becomes a $(\Gamma'\times\Gamma)$-twisted groupoid over $G$.

Now, we see that the equivalence relation $\sim$ on $\Gamma'\times\Sigma$ above is precisely the equivalence relation defined at the beginning of this section, where $\Gamma$ is replaced by $\Gamma'\times \Gamma$ and 
\[
    \Lambda:=\ker q\quad\text{where}\quad q:(z,\alpha)\mapsto z\theta(\alpha),\  \Gamma'\times\Gamma\to \Gamma'.
\]
Note that $q$ is a (continuous) group homomorphism because $\Gamma'$ is abelian so that $\Lambda$ is a (closed) subgroup of $\Gamma'\times \Gamma$. In fact, it is clear that $q$ is surjective and open, and so $\Gamma'$ is isomorphic to $(\Gamma'\times\Gamma)/\Lambda$ as topological groups. Notice that the following diagram commutes
\[\begin{tikzcd}
	   {G^{(0)} \times (\Gamma'\times\Gamma)} && \Gamma'\times\Sigma \\
       &&&& G\\
	   {G^{(0)} \times \Gamma'} && {(\Gamma'\times\Sigma)/{\sim}}
	   \arrow["\iota^\times", from=1-1, to=1-3]
	   \arrow["\id\times q"', from=1-1, to=3-1]
	   \arrow["\pi^\times", from=1-3, to=2-5]
	   \arrow["\text{equiv. projection}"', from=1-3, to=3-3]
	   \arrow["{\iota'}"', from=3-1, to=3-3]
	   \arrow["{\pi'}"', from=3-3, to=2-5]
    \end{tikzcd}\]
where the commutativity of the square comes from
\[
    [z,\iota(u,\alpha)]=[z\alpha,\iota(u,e)]\,.
\]
Thus, by Lemma \ref{lem: quotient of twisted groupoid}, we obtain that $(\Sigma',\iota',\pi')$ as defined above is a twist over $G$. 
\end{proof}

\begin{proof}[Proof of Theorem \ref{thm: compatible twists construction}]
The previous lemma provides most of the proof. Define
    \[
        \theta_1:\sigma\mapsto [e',\sigma],\ \Sigma\to \Sigma'.
    \]
Then $\theta_1$ is a continuous groupoid homomorphism that makes the diagram \eqref{diagram: twist extension} commutative.
In the case that $\theta$ is a topological embedding of $\Gamma$ as a closed subgroup of $\Gamma'$, that the corresponding property holds for $\theta_1:\Sigma\to\Sigma'$ follows from the commutative diagram \eqref{diagram: twist extension}.
\end{proof}


Finally, we arrive at the proposed comparison of the $\Gamma$-twisted and $\T$-twisted groupoid $C^*$-algebras with the following:

\begin{theorem}\label{thm: compatible twists have isomorphic algebras}
Let $(\Sigma',\iota',\pi')$ be a $\Gamma'$-twist over $G$ where $\Gamma'$ is a compact Hausdorff abelian group. Suppose that $\Gamma$ is also compact, that $\theta:\Gamma\to\Gamma'$ is a continuous group homomorphism, and that $\theta_1:\Sigma\to \Sigma'$ is a continuous groupoid homomorphism such that the diagram \eqref{diagram: twist extension} commutes.
Let $\chi'$ be a character on $\Gamma'$ and set $\chi:=\chi'\circ \theta$, a character on $\Gamma$. Then for each $f \in \Ccal_c(\Sigma; G; \chi)$, there is a unique $f'\in \Ccal_c(\Sigma'; G; \chi')$ such that the following diagram commutes:
\begin{equation}\label{thm: compatible twists have isomorphic algebras: diagram 2}
    \begin{tikzcd}
	\Sigma & {} & {\C}. \\
	\\
	{\Sigma'}
	\arrow["f", from=1-1, to=1-3]
	\arrow["{\theta_1}"', from=1-1, to=3-1]
	\arrow["{f'}"', from=3-1, to=1-3]
    \end{tikzcd}
\end{equation}
The mapping $f\mapsto f'$ defines a $*$-isomorphism $\Upsilon: \Ccal_c(\Sigma; G; \chi) \to \Ccal_c(\Sigma'; G; \chi')$, which extends to a $*$-isomorphism of the associated full and reduced $C^*$-algebras.
\end{theorem}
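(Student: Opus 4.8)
The plan is to first produce, for each $f\in\Ccal_c(\Sigma;G;\chi)$, the function $f'$ together with its existence, uniqueness and membership in $\Ccal_c(\Sigma';G;\chi')$; then to check that $\Upsilon$ is an algebraic $*$-isomorphism; and finally to promote $\Upsilon$ to the completions by showing it is isometric for both the reduced and the full norms. The first thing I would record is that $\Gamma'\cdot\theta_1(\Sigma)=\Sigma'$: given $\sigma'\in\Sigma'$, surjectivity of $\pi$ provides $\sigma\in\Sigma$ with $\pi(\sigma)=\pi'(\sigma')$, so $\pi'(\theta_1(\sigma))=\pi'(\sigma')$ and Lemma \ref{lemma:uniqueaction} (in $\Sigma'$) gives a unique $\beta'\in\Gamma'$ with $\sigma'=\beta'\cdot\theta_1(\sigma)$. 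This forces $f'(\sigma'):=\chi'(\beta')f(\sigma)$, which settles uniqueness. For well-definedness I would compare two representations $\beta'_1\cdot\theta_1(\sigma_1)=\beta'_2\cdot\theta_1(\sigma_2)$: applying $\pi'$ and Lemma \ref{lemma:uniqueaction} in $\Sigma$ yields $\alpha\in\Gamma$ with $\sigma_2=\alpha\cdot\sigma_1$, whence $\theta_1(\sigma_2)=\theta(\alpha)\cdot\theta_1(\sigma_1)$ and, by freeness of the $\Gamma'$-action (Lemma \ref{lemma:groupaction}), $\beta'_1=\beta'_2\theta(\alpha)$; the relation $\chi=\chi'\circ\theta$ then makes the two candidate values coincide. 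Equivariance of $f'$ is immediate. For continuity, I would fix $\gamma\in G$, observe that $\theta_1\circ P_\gamma$ is a continuous section of $\pi'$ over $U_\gamma$, and invoke Lemma \ref{lem: continuous local section} to get a homeomorphism $U_\gamma\times\Gamma'\to\pi'^{-1}(U_\gamma)$, under which $f'$ reads as $(\eta,\beta')\mapsto\chi'(\beta')f(P_\gamma(\eta))$, continuous; since the $\pi'^{-1}(U_\gamma)$ cover $\Sigma'$ this gives continuity everywhere. Compact support follows from $\support f'\subseteq\pi'^{-1}(\pi(\support f))$, which is compact because $\pi'$ is proper (Proposition \ref{pro: pi is proper}, using $\Gamma'$ compact).

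Next I would verify the algebraic statement. Linearity and injectivity are clear, and for surjectivity I would send $g\in\Ccal_c(\Sigma';G;\chi')$ to $g\circ\theta_1$: this is equivariant because $\theta_1(\alpha\cdot\sigma)=\theta(\alpha)\cdot\theta_1(\sigma)$ and $\chi=\chi'\circ\theta$, and compactly supported because $\support(g\circ\theta_1)\subseteq\pi^{-1}(\pi'(\support g))$ with $\pi$ proper (here using $\Gamma$ compact); uniqueness then gives $\Upsilon(g\circ\theta_1)=g$. Preservation of the involution follows from the identities $(\Upsilon f)^*\circ\theta_1=f^*$ (using $\theta_1(\sigma)^{-1}=\theta_1(\sigma^{-1})$) together with uniqueness, noting $(\Upsilon f)^*\in\Ccal_c(\Sigma';G;\chi')$ by Proposition \ref{prop: algebra of equivariant Cc}. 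For convolution I would exploit the explicit formula of Lemma \ref{lem: equivariant convolution}: for any section $\sfrak$ of $\pi$, the map $\theta_1\circ\sfrak$ is a section of $\pi'$, and evaluating $\Upsilon f*\Upsilon g$ at $\theta_1(\sigma)$ with this section, while using that $\theta_1$ is a groupoid homomorphism and $\Upsilon f\circ\theta_1=f$, $\Upsilon g\circ\theta_1=g$, yields $(\Upsilon f*\Upsilon g)\circ\theta_1=f*g$; uniqueness then gives $\Upsilon(f*g)=\Upsilon f*\Upsilon g$.

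For the reduced norm I would, for each $u\in\Sigma^{(0)}$, build a unitary $W_u:\Ltwo(\Sigma_u;G_u;\chi)\to\Ltwo(\Sigma'_u;G_u;\chi')$ intertwining the regular representations. The same section trick, combined with the $\abs{\chi}\equiv1$ computation underlying Lemma \ref{lem: equivariant Lone norm} applied to $\abs{\cdot}^2$, shows that $h\mapsto\Upsilon h$ is isometric from $\Ccal_c(\Sigma;G;\chi)\subseteq\Ltwo(\Sigma_u,\mu_u)$ into $\Ltwo(\Sigma'_u,\mu'_u)$, where $\set{\mu'^u}$ is the induced Haar system on $\Sigma'$; density (Lemma \ref{lem: Ltwo density of equivariant Cc}) then extends it to the unitary $W_u$. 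Using $\Ind\delta_u(f)(h)=f*h$ and the multiplicativity already established, I get $W_u\,\Ind\delta_u(f)=\Ind\delta_u(\Upsilon f)\,W_u$ on the dense subspace $\Ccal_c(\Sigma;G;\chi)$, hence $\norm{\Ind\delta_u(f)}=\norm{\Ind\delta_u(\Upsilon f)}$ (the operator norm being equivalently computed on $\Ltwo(\Sigma_u,\mu_u)$ or on the invariant subspace $\Ltwo(\Sigma_u;G_u;\chi)$, as noted before the definition of the reduced norm). Taking the supremum over $u$ gives $\norm{\Upsilon f}_r=\norm{f}_r$.

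For the full norm, Lemma \ref{lem: equivariant Lone norm} with the section $\theta_1\circ\sfrak$ likewise gives $\norm{\Upsilon f}_{I,r}=\norm{f}_{I,r}$ and $\norm{\Upsilon f}_{I,s}=\norm{f}_{I,s}$, so $\norm{\Upsilon f}_I=\norm{f}_I$; since $\Upsilon$ is then an $I$-norm-preserving $*$-isomorphism, $L\mapsto L\circ\Upsilon^{-1}$ is a bijection between the $I$-norm bounded representations of the two $*$-algebras, so the defining suprema agree and $\norm{\Upsilon f}=\norm{f}$. Being isometric for both norms, $\Upsilon$ extends to $*$-isomorphisms of the full and reduced $C^*$-algebras. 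I expect the main obstacle to be the reduced-norm step, where the intertwining unitary must be produced honestly — establishing the $\Ltwo$-isometry on equivariant functions and verifying the intertwining relation on a dense subspace — whereas the remaining steps are essentially book-keeping once the structural inputs are in hand: the properness of $\pi$ and $\pi'$ (i.e.\ compactness of $\Gamma$ and $\Gamma'$), the equivariance identity $\theta_1(\alpha\cdot\sigma)=\theta(\alpha)\cdot\theta_1(\sigma)$, and the observation that $\theta_1\circ\sfrak$ is a section of $\pi'$.
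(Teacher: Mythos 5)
Your proposal is correct and takes essentially the same route as the paper's proof: the same construction of $f'$ via orbit representatives and freeness of the $\Gamma'$-action, continuity via the section $\theta_1\circ P_\gamma$ and Lemma \ref{lem: continuous local section}, properness of $\pi$ and $\pi'$ for compact supports and for the inverse $g\mapsto g\circ\theta_1$, the section formula of Lemma \ref{lem: equivariant convolution} for multiplicativity, the $I$-norm isometry from Lemma \ref{lem: equivariant Lone norm} for the full norm, and the intertwining unitary built from the $\Ltwo$-isometry together with Lemma \ref{lem: Ltwo density of equivariant Cc} for the reduced norm. The only cosmetic difference is that you phrase the full-norm step as a bijection $L\mapsto L\circ\Upsilon^{-1}$ between the sets of $I$-norm bounded representations, whereas the paper extends $\Upsilon$ and $\Upsilon^{-1}$ to mutually inverse norm-decreasing homomorphisms of the full $C^*$-algebras -- the same argument in different clothing.
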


\begin{proof}
Let $f \in \Ccal_c(\Sigma; G; \chi)$. We claim that, given $\alpha',\beta'\in \Gamma'$ and $\sigma,\tau\in \Sigma$,
\[
    \alpha' \cdot \theta_1(\sigma)=\beta'\cdot \theta_1(\tau) \implies \chi'(\alpha')f(\sigma)=\chi'(\beta')f(\tau).
\]
 Indeed, the assumption and the commutativity of the triangle in \eqref{diagram: twist extension} imply that $\pi(\sigma)=\pi(\tau)$ and so there exists $c\in\Sigma$ such that $\tau=c\cdot \sigma$. We then see from the assumption and the commutativity of the square in \eqref{diagram: twist extension} that
\[  
    \alpha' \cdot \theta_1(\sigma)=\beta'\cdot \theta_1(\iota(r(\sigma),c))\theta_1(\sigma)=\beta'\theta(c)\cdot \theta_1(\sigma)
\]
and so $\alpha'=\beta'\theta(c)$. This implies,  using the equivariance of $f$, that 
\[
    \chi'(\alpha')f(\sigma)=\chi'(\beta')\chi(c)f(\sigma)=\chi'(\beta')f(\tau)
\]
as claimed. It follows from Lemma \ref{lemma:uniqueaction}, the commutativity of the triangle in \eqref{diagram: twist extension}, and this previous claim, that the formula
\[
    f'(\alpha'\cdot \theta_1(\sigma)):=\chi'(\alpha')f(\sigma)\quad (\alpha'\in\Gamma', \sigma\in\Sigma)
\]
well-defines the unique function $f':\Sigma'\to \C$ satisfying $f'(\alpha'\cdot \sigma')=\chi'(\alpha')f'(\sigma')$ for all $\alpha'\in \Gamma'$ and $\sigma'\in \Sigma'$. Let us now prove that $f^{\prime}$ is continuous; take any convergent net of the form $\alpha_i^{\prime} \cdot \theta_1(\sigma_i) \to \alpha^{\prime} \cdot \theta_1(\sigma)$, and let $U$ be any open neighbourhood of $f^{\prime}(\alpha^{\prime} \cdot \theta_1(\sigma))$. Suppose towards a contradiction that for every $i \in I$ there exists $i_0 \in I$ such that $i_0 \geq i$ and $f^{\prime}(\alpha^{\prime}_{i_0} \cdot \theta_1(\sigma_{i_0})) \notin U$. We pass to a subnet, discarding all the $i_0$ and re-labelling if necessary, to get a net $\{\alpha_i^{\prime} \cdot \theta_1(\sigma_i)\}$ which still converges to $\alpha^{\prime} \cdot \theta_1(\sigma)$, such that $f^{\prime}(\alpha^{\prime}_i \cdot \theta_1(\sigma_i))$ is never in $U$. By compactness of $\Gamma^{\prime}$, we can pass again to a subnet of $\{\alpha_i^{\prime}\}$ and re-label to obtain a net $\{\alpha^{\prime}_i\}$ converging to some $\alpha_0^{\prime} \in \Gamma^{\prime}$. It follows that
$$ \theta_1(\sigma_i) = (\alpha_i^{\prime})^{-1}\cdot(\alpha^{\prime}_i \cdot \theta_1(\sigma_i)) \to (\alpha^{\prime}_0)^{-1} \cdot (\alpha^{\prime} \cdot \theta_1(\sigma)). $$
By Proposition \ref{pro: pi is proper}, $\pi$ is a proper map. Since also $\pi^{\prime}$ is continuous, commutativity of the triangle in (\ref{diagram: twist extension}) demonstrates that $\theta_1$ is a proper map.
Therefore, letting $K$ be any compact neighbourhood of the limit above, $\theta_1^{-1}(K)$ is compact; certainly $\{\sigma_i\}$ is eventually in this set, so we can once again pass to a subnet, re-label, and get a net $\{\sigma_i\}$ converging to some $\sigma_0 \in \Sigma$. Uniqueness of limits ensures $\theta_1(\sigma_0) = (\alpha_0^{\prime})^{-1} \cdot (\alpha^{\prime} \cdot \theta_1(\sigma))$ and so, by continuity,
$$ f^{\prime}(\alpha_i^{\prime} \cdot \theta_1(\sigma_i)) = \chi^{\prime}(\alpha_i^{\prime})f(\sigma_i) \to \chi^{\prime}(\alpha_0^{\prime})f(\sigma_0) = f^{\prime}(\alpha^{\prime} \cdot \theta_1(\sigma)). $$
In particular, $f^{\prime}(\alpha^{\prime}_i \cdot \theta_1(\sigma_i))$ is eventually in $U$; but we already showed this is never in $U$. We conclude that $f^{\prime}(\alpha^{\prime}_i \cdot \theta_1(\sigma_i)) \to f^{\prime}(\alpha^{\prime} \cdot \theta_1(\sigma))$ and $f^{\prime}$ is continuous. Since $\support f'\subseteq \pi'^{-1}(\pi(\support f))$, we have shown that $f'\in \Ccal_c(\Sigma',G;\chi')$. Thus we obtain a well-defined map $\Upsilon: \Ccal_c(\Sigma; G; \chi) \to \Ccal_c(\Sigma'; G; \chi')$ which is a bijection; its inverse is the map $f'\mapsto f'\circ \theta_1$, using Proposition \ref{pro: pi is proper} again.

Next we show that $\Upsilon$ preserves $*$-algebra operations. Denote by $\set{{\mu'}^u}_{u \in \Sigma^{(0)}}$ the Haar system on $\Sigma'$ induced from $\set{\lambda^u}_{u \in G^{(0)}}$ in a similar manner as the Haar system $\set{\mu^u}_{u \in \Sigma^{(0)}}$ on $\Sigma$. Note that $\sfrak':=\theta_1\circ \sfrak$ defines a (not necessarily continuous) section of $\pi':\Sigma'\to G$. Taking $f,g \in \Ccal_c(\Sigma; G; \chi)$, we see from Lemma \ref{lem: equivariant convolution} that for all $\sigma\in\Sigma$,
    \begin{align*}
        (f' * g')(\theta_1(\sigma)) &= \int_Gf'((\theta_1\circ\sfrak)(\gamma))g'((\theta_1\circ\sfrak)(\gamma)^{-1}\theta_1(\sigma)) \ d\lambda^{r(\theta_1(\sigma))}(\gamma) \\
        &= \int_G f(\sfrak(\gamma))g(\sfrak(\gamma)^{-1}\sigma)\ d\lambda^{r(\sigma)}(\gamma)= f*g(\sigma)
    \end{align*}
and so $\Upsilon f* \Upsilon g=\Upsilon (f*g)$. Also,
    \begin{align*}
    (\Upsilon f)^*(\theta_1(\sigma)) = \overline{f'(\theta_1(\sigma)^{-1})} = \overline{f'(\theta_1(\sigma^{-1}))} &= \overline{f(\sigma^{-1})} = f^*(\sigma), 
    \end{align*}
so $(\Upsilon f)^* = \Upsilon(f^*)$. We have now shown that $\Upsilon$ is an algebraic $*$-isomorphism; it remains only to extend it to the $C^*$-algebra level. 

First we will deal with the full $C^*$-algebras. For this,  we see from Lemma \ref{lem: equivariant Lone norm} that for any $f \in \Ccal_c(\Sigma; G; \chi)$,
    \begin{align*}
         \norm{\Upsilon f}_{I, r} 
         &= \sup_{u \in (\Sigma')^{(0)}\equiv G^{(0)}} \int_G\abs{f' ((\theta_1\circ\sfrak)(\gamma))} \ d\lambda^u(\gamma) \\
          &= \sup_{u \in G^{(0)}\equiv \Sigma^{(0)}} \int_G\abs{f(\sfrak(\gamma))} \ d\lambda^u(\gamma) = \norm{f}_{I, r},
    \end{align*}
    and similarly $\norm{\Upsilon f}_{I, s} = \norm{f}_{I, s}$ so $\Upsilon$ is an $I$-norm isometry. Thus $\Upsilon$ is bounded and hence extends to a $*$-homomorphism from $\Ccal^*(\Sigma; G; \chi) \to \Ccal^*(\Sigma'; G; \chi')$. On the other hand, $\Upsilon^{-1}:\Ccal_c(\Sigma'; G; \chi') \to \Ccal_c(\Sigma; G; \chi)$ is also bounded and extends to a $*$-homomorphism between the $C^*$-algebras.
    It follows that $\Upsilon$ is also an isometry with respect to the full norm and hence, $\Upsilon$ extends to a $*$-isomorphism $\Ccal^*(\Sigma; G; \chi) \cong \Ccal^*(\Sigma'; G; \chi')$. 
    
To show isomorphism of the reduced $C^*$-algebras, we claim that $\Upsilon$ is also an isometry with respect to the $\Ltwo$ norms. To see this, take any $u \in \Sigma^{(0)}$ and $f \in \Ccal_c(\Sigma; G; \chi)$.  We see from Lemma \ref{lem: equivariant Lone norm} that
\begin{equation*}
\begin{split}
    \norm{f}^2_{\Ltwo(\Sigma_u; G_u; \chi)} &= \int_G \abs{f(\sfrak(\gamma))}^2 d\lambda_u(\gamma)\\
    &= \int_G \abs{f'((\theta_1\circ\sfrak)(\sigma))}^2 d\lambda_u(\gamma)\\
    &= \norm{\Upsilon f}_{\Ltwo((\Sigma')_u; G_u; \chi')}^2.
\end{split}
\end{equation*}
 It follows by Lemma \ref{lem: Ltwo density of equivariant Cc} that $\Upsilon$ extends to a unitary operator $\upsilon:\Ltwo(\Sigma_u; G_u; \chi)\to \Ltwo((\Sigma')_u; G_u; \chi')$. For $f,g \in \Ccal_c(\Sigma; G; \chi)$, from $\Upsilon(f*g)=\Upsilon f*\Upsilon g$, proven above, we obtain \[
\Upsilon[\Ind\delta_u(f)(g)]=\Ind\delta_u(\Upsilon f)(\Upsilon g).
\]
This shows that the two $\Ind\delta_u$, for $\Sigma$ and for $\Sigma'$, are unitarily equivalent (with respect to the unitary $\upsilon$). Thus $\Upsilon$ is an isometry with respect to the reduced norm, and extends to a $*$-isomorphism $C_r^*(\Sigma; G; \chi) \cong C_r^*(\Sigma'; G; \chi')$.
\end{proof}

The major consequence of this theorem is that for every locally compact Hausdorff abelian group $\Gamma$, the $\Gamma$-twisted groupoid $C^*$-algebras can be realised as twisted groupoid $C^*$-algebras:

\begin{corollary}
Suppose that $\Gamma$ is compact and that $\chi$ is a character on $\Gamma$. Then the full and reduced $\Gamma$-twisted groupoid $C^*$-algebras associated with $(\Sigma,\iota,\pi)$ and $\chi$ are canonically $*$-isomorphic, respectively, to the full and reduced twisted groupoid $C^*$-algebras associated with a twisted groupoid over $G$. 
\end{corollary}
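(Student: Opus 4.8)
The plan is to specialise the two preceding theorems to the case $\Gamma' = \T$, taking the homomorphism $\theta$ to be the character $\chi$ itself. This is legitimate because, by definition, a character on $\Gamma$ is precisely a continuous group homomorphism $\chi : \Gamma \to \T$ into the (compact, Hausdorff, abelian) circle group, so the hypotheses of Theorem \ref{thm: compatible twists construction} are met with this choice.

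First I would apply Theorem \ref{thm: compatible twists construction} with $\Gamma' = \T$ and $\theta = \chi$ to produce a $\T$-twist $(\Sigma', \iota', \pi')$ over $G$, together with the canonical continuous groupoid homomorphism $\theta_1 : \Sigma \to \Sigma'$ for which the diagram \eqref{diagram: twist extension} commutes. This $\T$-twist $\Sigma'$ is the ordinary twisted groupoid named in the statement. Next I would invoke Theorem \ref{thm: compatible twists have isomorphic algebras} for this $\T$-twist, choosing the character $\chi' = \id_\T$ on $\T$. Since $\T$ is compact and $\Gamma$ is compact by hypothesis, and since $\chi' \circ \theta = \id_\T \circ \chi = \chi$, the hypotheses of that theorem hold verbatim, so it yields a $*$-isomorphism $\Upsilon : \Ccal_c(\Sigma; G; \chi) \to \Ccal_c(\Sigma'; G; \id_\T)$ extending to $*$-isomorphisms of both the full and the reduced $C^*$-algebras. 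Finally I would identify $\Ccal_c(\Sigma'; G; \id_\T) = \Ccal_c(\Sigma'; G)$, the usual space of $\T$-equivariant functions underlying the ordinary twisted groupoid $C^*$-algebra (as in the earlier example recording $\Ccal_c(\Sigma; G; \id_\T) = \Ccal_c(\Sigma; G)$). This identifies the full and reduced $\Gamma$-twisted groupoid $C^*$-algebras with the full and reduced twisted groupoid $C^*$-algebras of $\Sigma'$ over $G$, which is exactly the claimed conclusion.

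There is no genuine analytic obstacle remaining: all the substantive work — constructing $\Sigma'$ and checking that it is a twist, and verifying that $\Upsilon$ is isometric for the $I$-norm and the $\Ltwo$-norms so that it passes to the full and reduced completions — has already been carried out in the two cited theorems. The only points demanding attention are bookkeeping ones: confirming that a character is exactly a homomorphism into $\T$, keeping careful track of which compactness hypotheses each theorem requires, and recording the factorisation $\chi = \id_\T \circ \chi$ so that the reduced-algebra conclusion (which relies on the compactness of $\T$, via Proposition \ref{pro: pi is proper} and Lemma \ref{lem: Ltwo density of equivariant Cc}) is genuinely available.
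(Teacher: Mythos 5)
Your proposal is correct and follows exactly the paper's own proof: apply Theorem \ref{thm: compatible twists construction} with $\Gamma' = \T$ and $\theta = \chi$ to obtain the $\T$-twist $(\Sigma', \iota', \pi')$, then apply Theorem \ref{thm: compatible twists have isomorphic algebras} with $\chi' = \id_\T$, noting $\chi = \id_\T \circ \chi$. You have merely made explicit the bookkeeping (that $\theta$ is the character $\chi$ itself, and that $\Ccal_c(\Sigma'; G; \id_\T) = \Ccal_c(\Sigma'; G)$) that the paper leaves implicit.
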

\begin{proof}
First, apply Theorem \ref{thm: compatible twists construction} with $\Gamma'=\T$, and then apply Theorem \ref{thm: compatible twists have isomorphic algebras} with the $\Gamma'$-twist $(\Sigma',\iota',\pi')$ obtained and with, the character $\chi'=\id_\T$ of $\Gamma'$.
\end{proof}

A further interesting consequence is that, a $\Gamma$-twisted groupoid $C^*$-algebra corresponding to any non-surjective character can be realised from a twist over $G$ by a \textit{finite} subgroup of the unit circle: 

\begin{corollary}
Suppose that $\Gamma$ is compact and that $\chi$ is a character on $\Gamma$ such that $\chi(\Gamma)\neq \T$. Then, there exists a finite subgroup $\Gamma'$ of $\T$ such that the full and reduced $\Gamma$-twisted groupoid $C^*$-algebras associated with $(\Sigma,\iota,\pi)$ and $\chi$ are canonically $*$-isomorphic, respectively, to the full and reduced $\Gamma'$-twisted groupoid $C^*$-algebras associated with a $\Gamma'$-twist over $G$ and the character the inclusion $\Gamma'\xhookrightarrow{}\T$. \qed
\end{corollary}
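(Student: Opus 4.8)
The plan is to reduce the final corollary to the main isomorphism theorem (Theorem \ref{thm: compatible twists have isomorphic algebras}), exactly as in the preceding corollary, but taking care that the codomain of the new character is a \emph{finite} group rather than all of $\T$. The first step is to analyse the image $\chi(\Gamma)$. Since $\Gamma$ is compact and $\chi:\Gamma\to\T$ is a continuous homomorphism, the image $\chi(\Gamma)$ is a compact (hence closed) subgroup of $\T$. The closed subgroups of $\T$ are well known: each is either all of $\T$ or a finite cyclic group $\Z_n$ of $n$-th roots of unity. By hypothesis $\chi(\Gamma)\neq\T$, so I would conclude that $\Gamma':=\chi(\Gamma)$ is a finite subgroup of $\T$, say $\Gamma'\cong\Z_n$.

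With $\Gamma'$ identified, the second step is to set up the morphism of twists. Let $\theta:\Gamma\to\Gamma'$ be the corestriction of $\chi$, i.e.\ $\theta(\alpha)=\chi(\alpha)$ but now viewed as a \emph{surjective} continuous homomorphism onto the finite group $\Gamma'$. Applying Theorem \ref{thm: compatible twists construction} to this $\theta$ produces a $\Gamma'$-twist $(\Sigma',\iota',\pi')$ over $G$ together with a continuous groupoid homomorphism $\theta_1:\Sigma\to\Sigma'$ making the diagram \eqref{diagram: twist extension} commute. Here $\Gamma'$ is a finite (hence compact Hausdorff abelian) group, so the compactness hypotheses needed downstream are satisfied.

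The third step is to choose the character $\chi'$ on $\Gamma'$. I would take $\chi'$ to be the inclusion map $\Gamma'\hookrightarrow\T$, which is visibly a character of $\Gamma'$. Then $\chi'\circ\theta=\theta=\chi$ by construction, so the compatibility condition $\chi=\chi'\circ\theta$ of Theorem \ref{thm: compatible twists have isomorphic algebras} holds. Applying that theorem (whose hypotheses---$\Gamma$ and $\Gamma'$ compact Hausdorff abelian, $\theta$ continuous, $\theta_1$ continuous making \eqref{diagram: twist extension} commute---are now all in place) yields $*$-isomorphisms
\[
    \Ccal^*(\Sigma;G;\chi)\cong \Ccal^*(\Sigma';G;\chi')
    \quad\text{and}\quad
    \Ccal^*_r(\Sigma;G;\chi)\cong \Ccal^*_r(\Sigma';G;\chi'),
\]
which is precisely the assertion, since $\chi'$ is the inclusion $\Gamma'\hookrightarrow\T$.

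The only substantive mathematical input beyond invoking the two theorems is the classification of closed subgroups of $\T$, and the main obstacle---really the only point requiring care---is confirming that the corestriction $\theta$ is genuinely continuous onto $\Gamma'$ with its subspace (equivalently discrete, since $\Gamma'$ is finite) topology, so that Theorems \ref{thm: compatible twists construction} and \ref{thm: compatible twists have isomorphic algebras} apply verbatim. This is immediate: a finite subgroup of $\T$ is discrete, and a homomorphism into a discrete group is continuous as soon as its kernel is open, which holds here because $\ker\theta=\ker\chi$ is the preimage of $\{1\}$ under the continuous map $\chi$ and $\{1\}$ is open in the finite group $\Gamma'$. Everything else is a direct citation of the already-established results.
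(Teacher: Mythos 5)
Your proposal is correct and is exactly the argument the paper intends: the corollary is stated with a \qed because it follows from Theorems \ref{thm: compatible twists construction} and \ref{thm: compatible twists have isomorphic algebras} just as the preceding corollary does, with $\Gamma':=\chi(\Gamma)$ (a proper closed, hence finite, subgroup of $\T$), $\theta$ the corestriction of $\chi$, and $\chi'$ the inclusion $\Gamma'\hookrightarrow\T$, so that $\chi=\chi'\circ\theta$. Your verification of the classification of closed subgroups of $\T$ and of the continuity of the corestriction supplies precisely the details the paper leaves implicit.
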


\printbibliography

@article{courtney2023alexandrov,
      title={Alexandrov groupoids and the nuclear dimension of twisted groupoid $\mathrm{C}^*$-algebras}, 
      author={K. Courtney and A. Duwenig and M. C. Georgescu and A. an Huef and M. Grazia Viola},
      journal={J. Funct. Anal.},
      volume={286},
      year={2024},
      issn={0022-1236},
      doi={10.1016/j.jfa.2024.110372}
}

@article{Armstrong_2022,
   title={A uniqueness theorem for twisted groupoid C*-algebras},
   volume={283},
   ISSN={0022-1236},
   DOI={10.1016/j.jfa.2022.109551},
   number={6},
   journal={J. Funct. Anal.},
   publisher={Elsevier BV},
   author={Armstrong, B.},
   year={2022},
   month=sep,
   pages={109551} }

@article{BCLM,
title = {Cartan semigroups and twisted groupoid $C^\ast$-algebras},
journal = {J. Funct. Anal.},
volume = {289},
number = {7},
pages = {111038},
year = {2025},
issn = {0022-1236},
doi = {10.1016/j.jfa.2025.111038},
author = {T. Bice and L. O. Clark and Y.-F. Lin and K. McCormick}
}

@article {CaH1,
    AUTHOR = {L. O. Clark and A. an Huef},
     TITLE = {Principal groupoid {$C^*$}-algebras with bounded trace},
   JOURNAL = {Proc. Amer. Math. Soc.},
  FJOURNAL = {Proceedings of the American Mathematical Society},
    VOLUME = {136},
      YEAR = {2008},
    NUMBER = {2},
     PAGES = {623--634},
      ISSN = {0002-9939,1088-6826},
   MRCLASS = {46L05 (46L55)},
  MRNUMBER = {2358504},
MRREVIEWER = {N\'andor\ Sieben},
       DOI = {10.1090/S0002-9939-07-09035-1}
}

@article {CaH2,
    AUTHOR = {L. O. Clark and A. an Huef},
     TITLE = {The representation theory of {$C^*$}-algebras associated to
              groupoids},
   JOURNAL = {Math. Proc. Cambridge Philos. Soc.},
  FJOURNAL = {Mathematical Proceedings of the Cambridge Philosophical
              Society},
    VOLUME = {153},
      YEAR = {2012},
    NUMBER = {1},
     PAGES = {167--191},
      ISSN = {0305-0041,1469-8064},
   MRCLASS = {46L05 (22A22)},
  MRNUMBER = {3043775},
       DOI = {10.1017/S0305004112000047}
}

@article {KPRR,
    AUTHOR = {A. Kumjian and D. Pask and I. Raeburn and J. Renault},
     TITLE = {Graphs, groupoids, and {C}untz-{K}rieger algebras},
   JOURNAL = {J. Funct. Anal.},
  FJOURNAL = {Journal of Functional Analysis},
    VOLUME = {144},
      YEAR = {1997},
    NUMBER = {2},
     PAGES = {505--541},
      ISSN = {0022-1236,1096-0783},
   MRCLASS = {46L05 (46L55)},
  MRNUMBER = {1432596},
MRREVIEWER = {Masatoshi\ Enomoto},
       DOI = {10.1006/jfan.1996.3001}
}

@book {Paterson,
    AUTHOR = {A. L. T. Paterson},
     TITLE = {Groupoids, inverse semigroups, and their operator algebras},
    SERIES = {Progress in Mathematics},
    VOLUME = {170},
 PUBLISHER = {Birkh\"auser Boston, Inc., Boston, MA},
      YEAR = {1999},
     PAGES = {xvi+274},
      ISBN = {0-8176-4051-7},
   MRCLASS = {22A22 (20M18 22-02 22D25 46L05)},
  MRNUMBER = {1724106},
MRREVIEWER = {Jean\ N.\ Renault},
       DOI = {10.1007/978-1-4612-1774-9}
}

@book{toolkit,
    author = {D. P. Williams},
    series = {Mathematical Surveys and Monographs},
    volume = {241},
    title = {A Tool Kit for Groupoid $C^*$-Algebras},
    publisher = {AMS, Providence, Rhode Island},
    year = {2019},
    isbn = {9781470451332}
}

@book{abstract_harmonic,
    author = {G. B. Folland},
    series = {Studies in Advanced Mathematics},
    title = {A Course in Abstract Harmonic Analysis},
    publisher = {CRC Press},
    year = {1995},
    ISBN = {0849384907}
}

@book{cohn_measure,
    author = {D. L. Cohn},
    title = {Measure Theory},
    subtitle = {Second Edition},
    DOI = {10.1007/978-1-4614-6956-8},
    ISBN = {9781461469551},
    publisher = {Birkh\"{a}user},
    year = {2013},
    series = {Birkh\"{a}user Advanced Texts Basler Lehrb\"{u}cher}
}

@article{deitmar2017haar,
      title={On Haar systems for groupoids}, 
      author={A. Deitmar},
      year={2018},
      journal={Z. Anal. Anwend.},
      volume={37},
      number={3},
      pages={269--275},
      doi={10.4171/ZAA/1613}
}

@article{Kumjian_1986,
    title={On C*-Diagonals},
    volume={38},
    DOI={10.4153/CJM-1986-048-0},
    number={4},
    journal={Canad. J. Math.},
    author={Kumjian, A.},
    year={1986},
    pages={969–-1008}
}

@article{armstrong2023twistminimaletalegroupoid,
    author = {Armstrong, B. and Ng, A. C. S. and Sims, A. and Zhou, Y.},
    title = {A twist over a minimal \'etale groupoid that is topologically nontrivial over the interior of the isotropy},
    journal = {Proc. Amer. Math. Soc.},
    year = {2025},
    number = {5},
    pages = {1849--1866},
    doi = {10.1090/proc/17159},
    issn = {0002-9939, 1088-6826}
}

@book{munkres,
    author = {Munkres, J. R.},
    title = {Topology},
    publisher = {Pearson College Div},
    year = {2000},
    ISBN = {9780131\-816299},
    edition={2}
}

@book{pontryagin_duality,
    author = {Morris, S.},
    title = {Pontryagin Duality and the Structure of Locally Compact Abelian Groups},
    publisher = {Cambridge University Press},
    year = {1977},
    isbn = {9780511600722}
}

@article{buss--sims_opposite,
    author = {A. Buss and A. Sims},
    title = {Opposite algebras of groupoid $C^*$-algebras},
    journal = {Israel J. Math.},
    year = {2021},
    pages = {759--774},
    number = {244.2},
    doi = {10.1007/s11856-021-2190-5}
}

@book{renault_thesis,
    author = {J. Renault},
    title = {A Groupoid Approach to $C^*$-Algebras},
    publisher = {Springer Berlin, Heidelberg},
    year = {1980},
    series = {Lecture Notes in Mathematics},
    doi = {10.1007/BFb0091072},
    isbn = {9783540392187}
}

@article{Renault_IMSbulletin,
    author = {Renault, J.},
    title = {Cartan Subalgebras in $C^*$-Algebras},
    journal = {Irish Math. Soc. Bulletin},
    number = {61},
    year = {2008},
    pages = {29--63},
    doi = {10.33232/bims.0061.29.63}
}

@article{pushouts,
    author = {M. Ionescu and A. Kumjian and J. N. Renault and A. Sims and D. P. Williams},
    label={IKRSW},
    title = {Push\-outs of extensions of groupoids by bundles of abelian groups},
    journal = {New Zealand J. Math.},
    year = {2021},
    doi = {10.53733/136},
    volume = {52},
    pages = {561--581}
}

@article{IKRSW21,
    title = {C*-algebras of extensions of groupoids by group bundles},
    journal = {J. Funct. Anal.},
    volume = {280},
    number = {5},
    pages = {108892},
    year = {2021},
    doi = {10.1016/j.jfa.2020.108892},
    author = {M. Ionescu and A. Kumjian and J. N. Renault and A. Sims and D. P. Williams},
    label={IKRSW}
}

@article{BFPR_graded,
    title = {Graded C*-algebras and twisted groupoid C*-algebras},
    author = {Brown, {J. H.} and Fuller, {A. H.} and Pitts, {D. R.} and Reznikoff, {S. A.}},
    year = {2021},
    month = jan,
    volume = {27},
    pages = {205--252},
    journal = {New York J. Math.},
    issn = {1076-9803},
    label={BFPR}
}

@article{Muhly_Williams_1992,
    title={Continuous Trace Groupoid $C^*$-Algebras, II.},
    volume={70},
    DOI={10.7146/math.scand.a-12390},
    journal={Math. Scand.},
    author={Muhly, P. S. and Williams, D. P.},
    year={1992},
    month=jun,
    pages={127–-145}
}

@book{simsgroupoidnotes,
    author = {Sims, A.},
    title = {Hausdorff \'Etale Groupoids and Their $C^*$-algebras},
    publisher = {Birkh\"auser--Springer},
    year = {2020},
    isbn = {9783030397135},
    series = {Advanced Courses in Mathematics, CRM Barcelona},
    editor = {Francesc Perera}
}

@article{etaleautomaticsections,
    author = {B\"onicke, C.},
    title = {K-theory and Homotopies of Twists on Ample Groupoids},
    journal = {J. Noncommut. Geom.},
    volume = {15},
    year = {2021},
    number = {1},
    pages = {195--222},
    doi = {10.4171/JNCG/399}
}

@article{autosections-gleason,
    author = {Gleason, A. M.},
    title = {Spaces with a compact Lie group of transformations},
    journal = {Proc. Amer. Math. Soc.},
    volume = {1},
    year = {1950},
    doi = {10.1090/S0002-9939-1950-0033830-7},
    pages = {35--43}
}

@article{autosections-palais,
    author = {Palais, R. S.},
    title = {On the existence of slices for actions of non-compact Lie groups} ,
    journal = {Ann. of Math.},
    year = {1961},
    volume = {73},
    pages = {295--323},
    doi = {10.2307/1970335}
}

@book{raeburn-moritaequivalence,
    author = {Raeburn, I. and Williams, D. P.},
    title = {Morita Equivalence and Continuous-Trace $C^*$-Algebras},
    publisher = {Amer. Math. Soc.},
    year = {1991},
    series = {Mathematical Surveys and Monographs},
    volume = {60},
    doi = {10.1090/surv/60},
    
}

@article{IKSW19,
    author = {Ionescu, M. and Kumjian, A and Sims, A. and Williams, D. P.},
    title = {The Dixmier--Douady Classes of Certain Groupoid $C^*$-Algebras with Continuous Trace},
    journal = {J. Operator Theory},
    year = {2019},
    doi = {10.7900/jot.2018mar07.2209},
    volume = {81},
    number = {2},
    pages = {407--431}
}

\end{document}